\DeclareMathOperator{\Wh}{Wh}
\DeclareMathOperator{\Gl}{Gl}
\DeclareMathOperator{\id}{Id}
\newcommand{\R}{\mathbb{R}}
\newcommand{\Z}{\mathbb{Z}}
\newcommand{\bR}{\mathbb{R}}
\newcommand{\bC}{\mathbb{C}}
\newcommand{\bZ}{\mathbb{Z}}
\newcommand{\cA}{\mathcal{A}}
\newcommand{\cM}{\mathcal{M}}
\newcommand{\ro}{\mathrm{o}}
\newcommand{\Spin}{\mathrm{Spin}}
\renewcommand{\epsilon}{\varepsilon}
\newcommand{\cX}{\widetilde{X}}
\newcommand{\cY}{\widetilde{Y}}
\newcommand{\cw}{\textrm{CW}}
\DeclarePairedDelimiter\absv{\lvert}{\rvert}
\DeclarePairedDelimiter\norm{\lVert}{\rVert}
\newtheorem{theorem}{Theorem}
\newtheorem{proposition}{Proposition}[section]
\newtheorem{lemma}[proposition]{Lemma}
\newtheorem{corollary}[proposition]{Corollary}
\theoremstyle{remark}
\newtheorem{remark}[proposition]{Remark}
\numberwithin{equation}{section}
\theoremstyle{definition}
\newtheorem{definition}[proposition]{Definition}
\def\co{\colon\thinspace}
\newcommand{\superscript}[1]{\ensuremath{^{\textrm{#1}}} }
\renewcommand{\th}[0]{\superscript{th}}
\begin{document}

\title[Simple homotopy equivalence of nearby Lagrangians]{Simple homotopy equivalence of nearby Lagrangians}

\author[M. Abouzaid]{Mohammed Abouzaid}
\author[T. Kragh]{Thomas Kragh}
\thanks{MA was supported by NSF grant DMS-1308179, and by Simons Foundation grant 385571. The authors were hosted by the Mittag-Leffler institute during its ``Symplectic Geometry and Topology'' program.}

\begin{abstract} Given a closed exact Lagrangian in the cotangent bundle of a closed smooth manifold, we prove that the projection to the base is a simple homotopy equivalence. 
\end{abstract}

\date{\today}

\keywords{exact Lagrangian embedding, cotangent bundle, spectral sequence}

\maketitle
\setcounter{tocdepth}{1}
\section{Introduction}
Let $N$ be a closed smooth manifold. In accordance with conjectures of Arnol'd and Eliashberg, the symplectic topology of the cotangent bundle $T^*N$ is expected to be deeply tied to the smooth topology of $N$.  One way to study the symplectic topology of $T^*N$ is to consider its Lagrangian submanifolds; symplectic topologists have traditionally focused on those Lagrangians which are exact with respect to the canonical Liouville form on the cotangent bundle.  The main result of this paper is the following:
\begin{theorem}
  \label{thm:1}
  Let $L \subset T^*N$ be a closed exact Lagrangian. The canonical map $L\subset T^*N \to N$ is a simple homotopy equivalence.
\end{theorem}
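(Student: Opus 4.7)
The plan is to reduce Theorem~\ref{thm:1} to the vanishing of a single Whitehead torsion class and then to compute this class using parametrized Floer theory. It is already known from earlier work (of Fukaya--Seidel--Smith, Nadler, the first author, and the second author) that the projection $p \co L \to N$ is a homotopy equivalence; the content of the theorem is therefore precisely the assertion that its Whitehead torsion $\tau(p) \in \Wh(\pi_1 N)$ is zero. I would work throughout at the chain level over $\Z[\pi_1 N]$ with a geometrically natural basis, so that the torsion becomes a computable class rather than an abstract obstruction.

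The core construction would be a chain-level comparison between $C_*(\cY)$ and $C_*(\cX)$, where $\cX$ and $\cY$ denote the universal covers of $N$ and $L$. Choose a $C^2$-small Morse function $F$ on $N$; the Floer complex of $L$ against the graph of $dF$ then has a finite $\Z[\pi_1 N]$-basis of $\pi_1 N$-orbits of transverse intersection points. A PSS-type argument identifies this complex with the Morse complex of $F$, and hence with $C_*(\cX)$ up to a controlled change of basis. On the other hand, the Hamiltonian action filtration gives rise to the spectral sequence suggested by the keywords, whose $E_2$-page records a local coefficient system on $N$ and whose abutment is naturally quasi-isomorphic to $C_*(\cY)$. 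The projection $p$ corresponds to the composition of these two identifications, so its Whitehead torsion can in principle be read off from the differentials of the spectral sequence together with the basis change in the PSS map.

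The main obstacle I anticipate is enforcing simple homotopy, rather than merely quasi-isomorphism, at every step: PSS maps, continuation maps, and spectral-sequence convergence are all typically formulated up to chain homotopy, while Whitehead torsion is sensitive to the explicit choice of basis and the honest matrix representing each differential. To bridge this gap I would lift the entire construction into Waldhausen's $A$-theory of $N$, so that the parametrized Floer complex defines a class whose image in $\Wh(\pi_1 N)$ is exactly $\tau(p)$. The desired vanishing then becomes a structural statement about this $A$-theoretic class, to be proved by an induction through the pages of the action-filtration spectral sequence and reduced, via the tangential structure of $L$ established in our earlier work, to the triviality of units coming from generic Floer data. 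The key calculation is to check that the obstructions at each stage land in a subgroup of $K_1(\Z[\pi_1 N])$ that maps to zero in the Whitehead group, which should follow from parity and duality constraints intrinsic to Floer theory on cotangent bundles.
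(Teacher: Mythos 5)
You correctly reduce the statement to the vanishing of $\tau(p) \in \Wh(\pi_1 N)$, and you correctly identify the right tools: Floer complexes with $\Z[\pi_1 N]$ coefficients, the action filtration, and a PSS-type comparison with the Morse complex. This much matches the scaffolding of the paper's proof, which is organised around exactly this plan (a Floer-theoretic torsion is defined, compared to the classical one via a PSS-type argument, and then shown to vanish).

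However, the central mechanism by which the torsion is shown to vanish is absent from your proposal, and the substitute you offer is too vague to close the gap. You acknowledge the key difficulty yourself: Whitehead torsion is a chain-level invariant sensitive to bases, while PSS maps, continuation maps, and spectral-sequence arguments are only chain-homotopy-level. Your remedy is to lift to Waldhausen's $A$-theory and then appeal to ``parity and duality constraints intrinsic to Floer theory on cotangent bundles'' and ``triviality of units coming from generic Floer data.'' Neither of these is substantiated, and there is no reason offered why the relevant $K_1$-classes should die in the Whitehead group. As written this is not a proof but a hope. The paper's actual mechanism is quite different and quite concrete: it applies the Liouville flow $\psi^r$ for $r \ll 1$ to scale $L$ down to $L^r$, so that $L^r$ becomes $C^1$-close to the zero section $N$. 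Comparing $CF_*(L^r, L^r_1; R)$ with $CF_*(N, N_1; R)$ (where the second Lagrangian in each pair is the Hamiltonian deformation by $g \circ q$ for a Morse function $g$ on $N$), every intersection point of $L^r$ with $L^r_1$ is then clustered near a critical point of $g$ with action approximately $-g$ of that critical point. The action filtration therefore splits the map into blocks, one for each critical point; on each block the continuation map is (via the monotonicity lemma) supported in a contractible neighbourhood and hence is simply a $\Z$-coefficient map tensored with $R$, which has trivial torsion automatically. An elementary algebraic lemma (a filtered complex whose graded pieces are acyclic with trivial torsion has trivial torsion) then finishes. This geometric localisation step --- deform $L$ to be very small so the calculation becomes local --- is the idea your proposal needs and does not have; without it, you are left asserting rather than proving that the torsion vanishes.

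Two smaller points. First, in your setup you take a small Morse function $F$ on $N$ and consider $L$ against $\Gamma(dF)$; the paper instead scales $L$ down and compares $L^r$ against $L^r_1$, and this is not cosmetic --- the scaling is what makes the action filtration align intersection points with critical points of $g$. Second, it is worth noting that the paper sidesteps all the basis-ambiguity issues you worry about by using only the action filtration and the clean algebraic observation that a continuation map which is upper-triangular with $\pm 1$ on the diagonal has trivial torsion; no appeal to $A$-theory or duality is needed.
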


We shall presently explain our strategy for proving this result, after highlighting the following consequence: recall that a lens space is a quotient of $S^{2n-1}  \subset \bC^{2n}$ by a finite cyclic subgroup of $U(n)$.  It was classically known following Franz and Reidemeister that the simple homotopy type of lens spaces determines their diffeomorphism type (a modern account is provided by Milnor in \cite[Section 9]{MR0196736}). From Theorem \ref{thm:1}, we therefore conclude an answer to a long-standing open question:
\begin{corollary}
A pair of lens spaces are diffeomorphic if and only if their cotangent bundles are symplectomorphic. \qed
\end{corollary}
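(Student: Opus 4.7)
My plan is to treat the two directions separately. The forward implication is immediate: a diffeomorphism $f\co L_1 \to L_2$ lifts via the cotangent functor to a symplectomorphism $T^*L_1 \to T^*L_2$, so all the work lies in the converse.

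Assume $\phi\co T^*L_1 \to T^*L_2$ is a symplectomorphism. I would transport the zero section through $\phi$ to obtain a closed Lagrangian $L := \phi(L_1) \subset T^*L_2$, which is manifestly diffeomorphic to $L_1$ via the restriction of $\phi$. The first substantive step is to verify that $L$ is an \emph{exact} Lagrangian. Letting $\lambda_i$ denote the Liouville form on $T^*L_i$, the $1$-form $\phi^*\lambda_2 - \lambda_1$ is closed (both are primitives of the same symplectic form), and since $\lambda_1$ vanishes on the zero section, its pullback to $L_1$ reduces to the pullback of $\phi^*\lambda_2$, representing a class in $H^1(L_1;\bR)$. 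Lens spaces are rational homology spheres, so this group vanishes; the pullback is therefore exact, and hence so is $\lambda_2|_L$. This shows $L$ is an exact Lagrangian in $T^*L_2$.

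I next apply Theorem \ref{thm:1} to $L \subset T^*L_2$, obtaining that the canonical projection $L \to L_2$ is a simple homotopy equivalence. Composing with the diffeomorphism $L_1 \to L$ furnished by $\phi$ (which is automatically a simple homotopy equivalence), I conclude that $L_1$ and $L_2$ are simple homotopy equivalent. The final step is the classical theorem of Reidemeister and Franz, recorded in \cite[Section 9]{MR0196736}, which states that simple homotopy equivalent lens spaces are diffeomorphic.

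Granted Theorem \ref{thm:1}, no step presents a genuine obstacle: the only place demanding care is the exactness check, and that is trivialized by the vanishing of $H^1$ of a lens space with real coefficients. In this sense the corollary is a formal consequence of Theorem \ref{thm:1} combined with the Reidemeister--Franz classification.
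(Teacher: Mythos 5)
Your proof is correct and follows exactly the route the paper intends (the corollary is stated with \verb|\qed|, treated as immediate from Theorem~\ref{thm:1} together with the Franz--Reidemeister classification cited from Milnor). You have simply filled in the details left implicit there: the easy forward direction, the verification that the transported Lagrangian is exact (which reduces, as you note, to $H^1(L_1;\bR)=0$ for a lens space), and the fact that composing with the diffeomorphism $L_1 \to L$ preserves the vanishing of Whitehead torsion.
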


The proof of Theorem \ref{thm:1} can be broken into three different steps: first, we introduce a notion of \emph{Whitehead torsion} for a Floer theoretic equivalence. Here, a Floer theoretic equivalence should be interpreted as an equivalence in the Fukaya category of a symplectic manifold, though our usage of this machinery is quite limited, and we give a self-contained discussion of this notion in terms of the existence of Floer cocycles whose products satisfy a certain non-degeneracy condition. At a technical level, the study of torsion invariants for equivalences is a minor generalisation of the work of Sullivan \cite{MR1935550} and Hutching and Lee \cite{MR1716272} whose work considered the torsion of acyclic Floer complexes. However, we find it convenient to provide our own account for the construction of this invariant, as the specific context we are studying allows for greatly simplified proofs. This is the content of Section \ref{sec:whit-tors-fukaya}.

The next step is to appeal to the results of \cite{Nadler,FSS,Abou1,MySympfib} showing that every closed Lagrangian in a cotangent bundle gives rise to an object of the Fukaya category which is Floer-theoretically equivalent to the $0$-section. In particular, it makes sense to consider the torsion of this Floer-theoretic equivalence, and we prove that this vanishes in Section \ref{sec:spectr-sequ}. This section is at the heart of the paper, and the key idea that it uses is a large-scale Hamiltonian deformation (see also \cite{HomotopySerre}) which allows us to reduce the computation of torsion to a computation at the level of certain associated graded groups which admit a particularly simple description.

Finally, we show in Section~\ref{sec:from-cellular-floer} that the Floer-theoretic torsion which we introduce agrees, in the setting of cotangent bundles, with the classical torsion. Here, the main ingredients is the compatibility of Floer and Morse theoretic constructions in the exact setting, reminiscent of the constructions of Puinikin-Schwarz-Salamon relating Floer and Quantum cohomology.

From the above ingredients, the proof of Theorem \ref{thm:1} is immediate. We make it explicit for the record:
\begin{proof}
Proposition \ref{prop:CWtoFloer:1} shows that the Whitehead torsion of the map $L \to N$ agrees with the  torsion of the Floer theoretic equivalence between $L$ and $N$. Proposition \ref{prop:floer_torsion_vanishes} implies that this torsion vanishes.
\end{proof}

\subsection*{Acknowledgments:} The first author would like to thank Shmuel Weinberger and Dennis Sullivan for explanations about the role of simple homotopy theory in the classification of manifolds. The second author would like to thank John Rognes for the same reason.

\section{Whitehead Torsion} \label{sec:whitehead-torsion}

In this section we give a crash course in Whitehead torsion and Whitehead groups (more details can be found in \cite{MR0035437}). We also prove a lemma about filtrations and Whitehead torsion which will be needed at several points later. Note that everything here is formulated in terms of $\Z$-graded chain complexes, but it works equally well for $\Z/2$ graded complexes (with one exception, which we will point out).

Let $R=\Z[G]$ be the group ring of a discrete group $G$, and let $C_* = (R[\alpha_i],d)$ be a finitely generated acyclic chain complex of left $R$ modules freely generated by some $\alpha_i$ with degree $\absv{\alpha_i}$  - we assume that the differential always increases the degree. The Whitehead torsion of the pair $(C_*, \{ \alpha_i \})$ is defined as the equivalence class under the relations generated by
\begin{itemize}
\item A \emph{very simple expansion} given by direct sum with the chain complex 
$$\cdots \to 0 \to R \xrightarrow{\id} R \to 0 \to \cdots $$
with the obvious choice of generators.
\item A \emph{handle slide} given by replacing a basis element by itself plus a linear combination of the other basis elements in the same degree.
\item A \emph{simple base change} given by replacing any $\alpha_i$ with $(\pm g)\alpha_i$ for $g\in G$.
\end{itemize}
These equivalence classes form a group $\Wh(G)$ (the Whitehead group of $G$) under direct sum. We speak of the Whitehead torsion of a complex when the choice of basis is self-evident (as it is for Floer, Morse, and simplicial complexes).

Using the chosen basis (and choosing an auxiliary ordering) we can identify each differential with a matrix with coefficients in $\Z[G]$. Whitehead proved in \cite{MR0035437}  that using these moves any chain complex is equivalent to one with support in \emph{any} two adjacent degrees, and a single non-zero differential, which then has to be an isomorphism in $\Gl_n(\Z[G])$. Note, that this does not hold for $\Z/2$ graded complexes; and therefore the analogue of the Whitehead group in that case can be larger.  The inverse in the Whitehead group is given by the chain complex with the inverse differential. Indeed, one can show using simple base changes and handle slides that the differential
\begin{align*}
  \begin{pmatrix}
    A & 0 \\
    0 & A^{-1}
  \end{pmatrix} \sim
  \begin{pmatrix}
    AA^{-1} & 0 \\
    0 & I
  \end{pmatrix} = I.
\end{align*}
Here $\sim$ should be interpreted as equality in the Whitehead group between the two-term chain complexes obtained from the two matrices.

For any chain homotopy equivalence $f:(C,d_C) \to (D,d_D)$ of left $\Z[G]$-module chain complexes (with bases) we can define the mapping cone as
\begin{align*}
  (C[1] \oplus D, d)=
  \begin{pmatrix}
    d_C[1] & f \\
    0 & d_D
  \end{pmatrix}
\end{align*}
(with the direct sum basis). Here $C[1]$ means the chain complex shifted up by  $1$ in degree and changing the sign on the differential. This is an acyclic chain complex and so defines an element $\tau(f)$ in $\Wh(G)$. An important property of this that we will use repeatedly is that
\begin{align} \label{eq:5}
  \tau(f \circ g) = \tau(f)+\tau(g)
\end{align}
for the composition of chain homotopy equivalences.

Now let $X$ and $Y$ be finite CW complexes and $f\co X\to Y$ a cellular homotopy equivalence with $G=\pi_1(Y) \cong \pi_1(X)$. By considering the universal covering spaces $\cX \to X$ and $\cY \to Y$ with their natural CW structure we get (after a choice of base-points compatible with $f$) an induced chain homotopy equivalence
\begin{align*}
  \widetilde{f}_* \co C_*(\cX,\Z) \to C_*(\cY,\Z).
\end{align*}
Here our convention is that a $n$-dimensional cell contributes a generator of degree $-n$, so that the differential raises degree as with the rest of the chain complexes we shall study. Using the choice of base-points and choosing a lift of each cell in $X$ and $Y$ this lifts to a chain homotopy equivalence of $\Z[G]$-modules complexes (complete with choices of basis as above)
\begin{align*}
  \widetilde{f}_* \co C_*^{\cw}(X,\Z[G]) \to C_*^{\cw}(Y,\Z[G]).
\end{align*}
using the mapping cone construction as above. This defines the Whitehead torsion $\tau(f)=\tau(\widetilde{f}_*)\in \Wh(G)$ of $f$. This is well-defined since different choices of lifts of cells corresponds to simple base changes, and so does the choice of base-point (but it affects many generators simultaneously). The Whitehead torsion of $f$ is preserved under the notion of a simple expansion of $X$ (or $Y$) given by
\begin{itemize}
\item A \emph{simple expansion} is given by attaching the two cells (with positive $x_{n+1}$) in an upper half sphere $D^{n+1}\cap \{x_{n+1}\geq 0\}$ by any cellular map $D^n \to X$.
\end{itemize}
The inverse of this is called s \emph{simple contraction}. A simple expansion can be divided into to two moves (after choosing a base-point):
\begin{itemize}
\item A \emph{very simple expansion} given by attaching the two cells by the trivial map $D^n \to X$. This is the same as wedging with $D^{n+1}$ (having two cells and a base-point).
\item A \emph{handle slide} given by changing any attaching map of a cell to a homotopic cellular attaching map. By the homotopy extension property this extends to a homotopy equivalence of $X$ to the new representative of $X$ (well-defined up to handle slides of the higher dimensional cells).
\end{itemize}
One can prove that these two generates the same equivalence classes of finite CW complexes as the one move above - called the simple homotopy type. However we are more interested in the relative version for the map $f$, which asks whether the map is a simple homotopy equivalence, which is stronger than asking if there is an abstract simple homotopy equivalence between $X$ and $Y$. This requires the moves to be done on the pair $(M_f,X)$ where 
\begin{align*}
  M_f=X\times I \cup Y / (x,1)\sim f(x)
\end{align*}
is the mapping cylinder and $X=X\times \{0\}$ is the sub CW complex. Applying the moves on a pair $(A,B)$ means applying them to $A$ without changing the sub-space $B$, and $f$ is called a simple homotopy equivalence if $(M_f,X)\sim (X,X)$. Whitehead proved that this is equivalent to $\tau(f)=0$ in the Whitehead group. The reason that one does not pass to the quotient space $M_f/X$ is that it kills the fundamental group and thus there are no longer any obstructions for a homotopy equivalence to be a simple homotopy equivalence. However, one way to think of this as a quotient is to instead consider the universal covers quotient: $\widetilde{M_f}/\widetilde{X}$ which have a free action of $\pi_1$ (except on the base-point) and the moves are really $\pi_1$ equivariant moves (adding a $\pi_1$ worth of cells at a time), and then the question is whether or not we can perform such moves to get to the trivial space (consisting of just the base-point).


Whitehead torsion do not generally behave well when it comes to filtrations and spectral sequences, but we will need the following algebraic lemma, which is a very special case of using a spectral sequence to compute Whitehead torsion.

\begin{lemma}
  \label{lem:Whitehead:1}
  Let $F^pC$ be a filtered acyclic chain complex (of free based $\Z[G]$ left modules) such that each filtered quotient $F^pC/F^{p-1}C$ is free (using a subset of the same generators), acyclic, and has trivial Whitehead torsion. Then $C$ has trivial Whitehead torsion.
\end{lemma}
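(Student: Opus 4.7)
The plan is to induct on the filtration length and reduce to the case of a two-step filtration, which amounts to additivity of Whitehead torsion in a short exact sequence of acyclic based complexes. Since $C$ is finitely generated, only finitely many filtered quotients are non-zero; I would induct on that number $N$. When $N=1$ the complex $C$ equals a single filtered quotient and the claim is part of the hypothesis. For the inductive step, let $p_0$ be minimal with $F^{p_0}C\neq 0$ and set $A=F^{p_0}C$ and $D=C/A$. Then $A=F^{p_0}C/F^{p_0-1}C$ has trivial torsion; $D$ inherits a filtration with $N-1$ non-zero quotients satisfying the same hypotheses; and $D$ is acyclic by the long exact sequence of the pair. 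By the inductive hypothesis $\tau(D)=0$, so it remains to establish additivity $\tau(C)=\tau(A)+\tau(D)$ for the short exact sequence $0\to A\to C\to D\to 0$.

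Using the natural lift of the basis of $D$ (each generator of $D$ is the image of a generator of $C$ of filtration degree $>p_0$, which I lift back to that generator itself), $C=A\oplus D$ as based graded modules, and the differential takes the block-triangular form
\[
  d_C = \begin{pmatrix} d_A & f \\ 0 & d_D \end{pmatrix},
\]
where $f\co D\to A$ raises degree by one and satisfies $d_A f + f d_D=0$ (from $d_C^2=0$). My strategy is to kill $f$ by handle slides. Since $A$ is a finite acyclic complex of free $\Z[G]$-modules, its cycles are projective summands of its chain groups, so there exists a $\Z[G]$-linear chain contraction $h_A$ of degree $-1$ with $d_A h_A + h_A d_A = \id_A$.

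For each basis element $\delta$ of $D$, viewed in $C$ via the chosen lift, I would replace it by $\delta' \coloneqq \delta - h_A f\delta$. The correction $h_A f \delta$ lies in $A$ in the same degree as $\delta$ and is a $\Z[G]$-linear combination of basis elements of $A$, so this substitution is a handle slide (performed simultaneously on all new generators) and preserves the Whitehead torsion. A direct calculation using $d_A h_A=\id_A-h_A d_A$ and $d_A f=-f d_D$ shows $d_C \delta' = (d_D\delta)'$, so in the new basis the differential is the block-diagonal $d_A\oplus d_D$. Hence $C$ is equivalent as a based complex to $A\oplus D$, and additivity yields $\tau(C)=\tau(A)+\tau(D)=0$. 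The main point requiring care will be the justification that the simultaneous substitutions $\delta\mapsto\delta'$ constitute a legitimate sequence of handle slides; this boils down to the $\Z[G]$-linearity of $h_A$ and the fact that it respects the free-module basis structure of $A$.
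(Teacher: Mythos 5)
Your argument is correct but takes a genuinely different route from the paper's. You reduce to additivity of torsion in a short exact sequence $0 \to A \to C \to D \to 0$ of based acyclic complexes (with $A$ the bottom filtration step), and prove it by choosing a $\Z[G]$-linear chain contraction $h_A$ of the bounded acyclic free complex $A$ and performing the handle slides $\delta \mapsto \delta - h_A f\delta$ on the lifted basis of $D$ to split $C \sim A\oplus D$; this is essentially Milnor's additivity argument for torsion. The paper instead works from the top of the filtration: it orders the basis so the differential is block upper triangular by descending filtration degree, and exploits the \emph{given} sequence of moves trivializing the top quotient $F^{k+1}C/F^kC$, inserting extra handle slides just before each cancellation to clear the lower-filtration entries of the row being killed, thereby reducing $C$ to $F^kC$ in the Whitehead group. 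Both proofs are ultimately inductions on filtration length; yours is cleaner and more conceptual, offloading the bookkeeping to the existence of a chain contraction, while the paper's is more elementary and self-contained, never needing to construct a contraction. Your closing caveat resolves itself exactly as you suspect: the substitutions are legitimate handle slides simply because $h_A f\delta$ lies in the free module $A$ in the same degree as $\delta$, and hence is automatically a $\Z[G]$-linear combination of the given $A$-basis; $\Z[G]$-linearity of $h_A$ (guaranteed by the standard splitting argument for bounded acyclic complexes of projectives) is all that is needed, and no compatibility of $h_A$ with the basis is required. The claimed identity $d_C\delta'=(d_D\delta)'$ checks out: $d_C\delta' = f\delta + d_D\delta - d_Ah_Af\delta = d_D\delta + h_Ad_Af\delta = d_D\delta - h_Afd_D\delta$.
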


Note that the condition on the filtration to respect the basis in this way really means that it is a filtration of the basis elements.

\begin{proof}
  Since we have finitely many generators the filtration is finite (and we may assume it starts with $F^0C=0$). Let $k$ be minimal such that $F^{k+1}C=C$. We will prove that $C$ is equivalent in the Whitehead group to $F^kC$ and hence the lemma follows by induction.

  Consider the entire chain complex as one module (ignoring degree) and order the basis such that it starts with the basis elements in the top filtration degree and descends trough the filtration. This means that the differential for the entire complex can be written in the following upper triangular block form:
  \begin{align*}
    \begin{pmatrix}
      d_{k+1} & ? & \cdots & ? \\
      0 & d_{k} &  \cdots & ? \\
      \vdots & \ddots & \ddots & \vdots \\
      0 & \cdots & 0 & d_1 \\
    \end{pmatrix}
  \end{align*}
  where $d_p$ is the differential on $F^pC/F^{p-1}C$. Now since the top quotient $F^{k+1}C/F^kC$ has trivial Whitehead torsion there is a sequence of moves reducing it to the $0$ complex. Using the same sequence of moves but expanded by extra handle slides just before canceling a pair of generators we see that the complex is equivalent in the Whitehead group to $F^{k}C$. Indeed, right before we (in the original sequence) do a cancellation the differential on the top-left block has a 1 which has only zeros above, below, to its left and to its right (inside the block). However, to be able to extend this cancellation to all of $C$ we need to clear the possibles non-zero elements further to the right of it, but this is easily done by adding an extra handle slide. Indeed, if this situation represents $da=b+$(lower filtration terms), where $a$ and $b$ are generators in the top filtration - then we simply replace $b$ by $b+$(these lower filtration terms) and consider this new generator as still being a generator in top filtration. This way the complex is still filtered in the way specified by the lemma.
\end{proof}

The following lemma is not immediately clear from the definition, but will be needed to prove perturbation invariance in Floer and Morse theoretic contexts. However, the lemma should be rather intuitive given the description above of the moves as $G$ equivariant moves. Indeed, a $\Z[G]$ homotopy is like a $G$ invariant homotopy, and hence can be realized by a sequence of $G$ invariant simple moves.

\begin{lemma}
  \label{lem:Whitehead:2}
  If $f,g : C_* \to D_*$ are chain homotopic maps (as left $\Z[G]$-modules) then their Whitehead torsions agree.
\end{lemma}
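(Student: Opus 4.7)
The plan is to exhibit an explicit isomorphism between the mapping cones of $f$ and $g$ that is a composition of handle slides, hence defines the zero element of $\Wh(G)$.

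Let $h : C_* \to D_{*-1}$ be a chain homotopy with $g - f = d_D h + h d_C$ (up to signs matching the convention that the differential raises degree). The two mapping cones $\mathrm{Cone}(f) = (C[1]\oplus D, d_f)$ and $\mathrm{Cone}(g) = (C[1]\oplus D, d_g)$ carry the same underlying based module; only their differentials differ. A direct check shows that the lower-triangular map
\begin{align*}
  \phi = \begin{pmatrix} \id_{C[1]} & 0 \\ h & \id_D \end{pmatrix} \co \mathrm{Cone}(f) \to \mathrm{Cone}(g)
\end{align*}
is a chain isomorphism; this is the standard observation that chain homotopic maps have (canonically) isomorphic mapping cones. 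In particular $\mathrm{Cone}(f)$ and $\mathrm{Cone}(g)$ are equivalent as acyclic based complexes up to the moves encoded by $\phi$.

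The key remaining observation is that $\phi$ is nothing but a sequence of handle slides in the sense of Section~\ref{sec:whitehead-torsion}. Indeed, on basis elements $\phi$ fixes each generator $\beta_j$ of $D$ and sends each generator $\alpha_i[1]$ of $C[1]$ to $\alpha_i[1] + h(\alpha_i)$; since $h$ shifts degree by $-1$, the element $h(\alpha_i) \in D$ lies in the same degree as $\alpha_i[1]$ and is a $\Z[G]$-linear combination of the $\beta_j$. Applying these handle slides one generator at a time transforms the based complex $\mathrm{Cone}(f)$ into the based complex $\mathrm{Cone}(g)$, showing that they represent the same class in $\Wh(G)$. Therefore $\tau(f) = \tau(g)$.

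The only possible obstacle is keeping the sign and degree-shift conventions consistent between $d_C[1]$, the homotopy relation, and the formula for $\phi$; once these are fixed, both the verification that $\phi$ is a chain map and the identification of $\phi$ with a handle-slide sequence are immediate. No appeal to Lemma~\ref{lem:Whitehead:1} is needed here, since the argument operates on the cones themselves rather than on a filtration.
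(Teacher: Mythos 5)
Your proof is correct, and it takes a genuinely different and more direct route than the paper. The paper constructs an auxiliary acyclic complex $C[2]\oplus C[1]\oplus C[1]\oplus D$ containing the mapping cones of both $f$ and $g$ as filtered quotients, and then applies Lemma~\ref{lem:Whitehead:1} twice (with the two natural two-step filtrations) to conclude that both cones are equivalent to the big complex, hence to each other. Your argument instead exhibits the standard chain isomorphism $\phi = \bigl(\begin{smallmatrix}\id & 0\\ h & \id\end{smallmatrix}\bigr)\co \mathrm{Cone}(f)\to\mathrm{Cone}(g)$ and observes directly that, because $h$ is strictly off-diagonal and each $h(\alpha_i)$ is a $\Z[G]$-linear combination of the $D$-generators in the matching degree, the change of basis it induces is a finite composition of handle slides, so the two based complexes represent the same class in $\Wh(G)$. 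The verification that $\phi$ is a chain map is exactly the homotopy relation $d_D h + h\,d_C = g - f$, so the degree and sign bookkeeping indeed works out with the paper's conventions ($C[1]$ lowers the degree of each generator by one, and the homotopy $\Phi\co C[1]\to D$ is degree zero). Your route is shorter, self-contained, and avoids the detour through Lemma~\ref{lem:Whitehead:1}; the paper's longer route has the expository advantage of exercising the filtration lemma, which is the workhorse used throughout Sections~\ref{sec:whit-tors-fukaya} and~\ref{sec:spectr-sequ}, but nothing in the rest of the paper depends on proving this particular lemma that way. Both arguments are correct.
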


\begin{proof}
  Let $\Phi : C[1] \to D$ be a left $\Z[G]$-module chain homotopy from $f$ to $g$, i.e. we have $d\Phi+\Phi d = f-g$. Then there are sequences of moves showing the equivalence of the chain complexes
  \begin{align*}
    C[1]\oplus D \sim C[2]\oplus C[1] \oplus C[1] \oplus D \sim C[1]\oplus D
  \end{align*}
  with differentials
  \begin{align*}
    \begin{pmatrix}
      d_C[1] & f \\
      0 & d_D
    \end{pmatrix} \sim 
    \begin{pmatrix}
      d_C[2] & I & I & \Phi \\
      0 & d_C[1] & 0 & f \\
      0 & 0 & d_D[1] & -g \\
      0 & 0 & 0 & d_D
    \end{pmatrix} \sim
    \begin{pmatrix}
      d_C[1] & -g \\
      0 & d_D
    \end{pmatrix}
          \sim
    \begin{pmatrix}
      d_C[1] & g \\
      0 & d_D
    \end{pmatrix}
  \end{align*}
The moves can be obtained as in the proof of Lemma  \ref{lem:Whitehead:1} by considering the two canonical two-stage filtrations of the middle complex
  \begin{align*}
    C[1] \oplus D \subset C[2]\oplus C[1] \oplus C[1] \oplus D;
  \end{align*}
the  filtrations differ by which factor of $C[1]$ we include (the presence of a $0$ entry above the diagonal ensures that these are both subcomplexes). In both cases, the first quotient $F^2/F^1$ has no Whitehead torsion (since it is equivalent to the mapping cone of the identity) so we can proceed to remove it as in the proof of the Lemma  \ref{lem:Whitehead:1}.
\end{proof}

\section{Whitehead torsion and the Fukaya category} \label{sec:whit-tors-fukaya} 
Whitehead groups have been defined and used before in the context of intersection Floer homology, the work by M. Sullivan in  \cite{MR1935550} is the most relevant for this paper (see also \cite{MR1716272} which studies a much harder problem). Our treatment will however be essentially self-contained, in part because we will discuss torsion for \emph{Floer theoretic equivalences,} but also because we will only consider such groups in the exact setting.
This means that the action filtration will allow us to use Lemma~\ref{lem:Whitehead:1} to give a much shorter proof of the invariance of Whitehead torsion. We shall use Seidel's book \cite{MR2441780} as reference for various points in Lagrangian Floer theory.

\subsection{Floer cohomology and the action filtration}
\label{sec:floer-diff-prod}

We begin by considering the following context: let  $(M, \lambda)$ be a Liouville domain with symplectic form $\omega = d \lambda$, equipped with a \emph{grading} $\widetilde{Gr}_{\Lambda} (M)$ and a \emph{background class} $b \in H^2(M, \Z_2)$. Here, the grading is a choice of cover of the Grasmannian bundle of Lagrangian subspace in $TM$, which restricts over each point in $M$ to a universal cover for the Grasmannian of linear Lagrangians (see \cite[Section 11h]{MR2441780}). To simplify the exposition, we shall assume throughout that $b$ is the second Stiefel-Whitney class of an orientable vector bundle $V$ on $M$. In the example of interest, $M$ is the cotangent bundle of a smooth manifold $N$, the cotangent fibres lift to a section of the preferred grading and $V$ is the pullback of $TN \oplus (\det N)^{\oplus 3}$, whose second Stiefel-Whitney class is the pullback of $w_2 (N)$.

Consider an exact Lagrangian $L$ which is a \emph{brane} relative to the background class $b$: this means that we fixed  a function $f \co L \to \bR$ such that $df = \lambda_{|L}$, and $L$ is equipped with a lift to $\widetilde{Gr}_{\Lambda} (M) $, as well as a $\Spin$ structure on the direct sum of the tangent space with the restriction of the vector bundle $V$. The first structure implies that the Lagrangian has vanishing Maslov class, and the second that the second Stiefel-Whitney class agrees with the restriction of $b$. In the case of a cotangent bundle, it is clear that the zero section is a brane relative the pullback of $w_2 (N)$.



If $(L_0,f_0)$ and $(L_1,f_1)$ are transverse branes, we can associate to each intersection point $ x \in L_0 \cap L_1$ a \emph{Maslov index} $\deg(x) \in \Z$ and a graded one dimensional real vector space, which we denote $\ro_x$ as in \cite[Section 12f]{MR2441780} (see also \cite[Appendix A]{Abou2}), so that, whenever $J$ is a generic $\omega$-tame almost complex structure on $M$, the following holds: for each pair $(x,y)$ intersection points such that $\deg(y) +1 = \deg(x)$, every element $u$ of the moduli space $\cM(x,y)$  of $J$-holomorphic strips induces a canonical isomorphism
\begin{equation} \label{eq:map_induced_strip}
  \kappa_u \co \ro_x \to \ro_y.
\end{equation}
More precisely, we define $\cM(x,y)$ to be the quotient by $\bR$-translation of the space of holomorphic maps from $\bR \times [0,1]$ to $M$  with boundary conditions $L_0$ along $ \bR  \times \{0\} $ and $L_1$ along $ \bR \times \{1\} $, with asymptotic conditions $x$ at $-\infty \times [0,1] $  and $x$ at $\infty \times [0,1] $.

The Lagrangian Floer cochains are then defined to be the direct sum
\begin{equation} \label{eq:Floer_cochains}
  CF^*(L_0,L_1;\Z) = \bigoplus_{x  \in L_0 \cap L_1} |\ro_x|,
\end{equation}
where $ |\ro_x|$ is the orientation line of $\ro_x$ (i.e. the free abelian group generated by the two orientation of this vector space, with the relation that their sum vanishes). The differential is the sum, over all $u \in  \cM(x,y)$,  of the maps on orientation lines induced by Equation \eqref{eq:map_induced_strip}. 

Since we are considering the exact setting, we can associate to each intersection point its \emph{action}
\begin{equation}
\cA(x) = f_1(x) - f_0(x) \in \bR.
\end{equation}
An easy consequence of Stokes's theorem and the fact that $\int u^* \omega$ is positive whenever $u$ is a holomorphic curve is that the moduli space $\cM(x,y)$ is empty whenever $\cA(x) \leq \cA(y)$ and $x \neq y$ (see \cite[Section 8g]{MR2441780}). At the level of Floer complexes, we obtain a filtration by subcomplexes
\begin{align*}
  F^{a}CF^*(L_0,L_1;\Z) = \bigoplus_{\cA(x) \geq a} |\ro_x|.
\end{align*}
The reader should keep in mind that, with the conventions we have adopted, the differential \emph{raises} action.

Whenever $L = L_0 = L_1$, we follow Biran-Cornea\cite{MR2546618}, Seidel \cite{MR2819674} and Sheridan \cite{MR3294958}, and pick a Riemannian metric together with a Morse-Smale function $h: L \to \bR$ with a unique minimum, and define
\begin{equation}
 CF^*(L, L; \bZ) = CM^*( L, h; \bZ)
\end{equation}
where the right hand side are Morse cochains. The minimum of $h$ defines a cocycle $e \in CM^0( L, h; \bZ)$ which, at the level of cohomology, corresponds to the unit in $[e] \in H^0(L, \bZ)$ under the isomorphism of  Morse cohomology with ordinary cohomology. We set the action filtration on $ CF^*(L, L; \bZ)$ so that all generators have $0$ action.

\subsection{The product on Floer cohomology}
\label{sec:prod-floer-cohom}


Given a triple of Lagrangian branes $(L_0, L_1, L_2)$, which are in transverse position, we have a co-chain map
\begin{equation} \label{eq:Floer_product}
  CF^*(L_1,L_2;\Z) \otimes  CF^*(L_0, L_1; \Z) \xrightarrow{\mu_{2}} CF^*(L_0,L_2;\Z)
\end{equation}
given by a count of rigid triangles.  More precisely, say that $(J_{01}, J_{12}, J_{02})$ are a triple of almost complex structures used to define the above Floer  complexes. Choosing a (generic) family $J$ of almost complex structures on the disc, which agrees at three boundary marked points (ordered clockwise) with the above triple, we obtain a  moduli space of $\cM(x_0, x_1, x_2)$ associates to  every triple of intersection points $x_0 \in L_0 \cap L_2$,  $x_1 \in L_0 \cap L_1$ and $ x_2 \in L_1 \cap L_2$ as follows: elements of $\cM(x_0, x_1, x_2) $ are holomorphic maps from the disc to $M$,  mapping the three marked points to $x_0$, $x_1$, and $x_2$, with boundary conditions along the boundary given by  the triple of Lagrangians $L_0$, $L_1$ and $L_2$ (see Figure~\ref{fig:db}).
\begin{figure}[ht]
  \centering
  \begin{tikzpicture}
    \draw[fill=lightgray] (-1.8,0) circle (0.8);
    \draw (-2.1,+1.0) node {$L_0$};
    \draw (-1.0,0) node [right] {$L_1$};
    \draw (-2.1,-1.0) node {$L_2$};
    \fill (-2.6,0) circle (2pt) node [left] {$x_0$};
    \fill (-1.29,0.61) circle (2pt) node [above right] {$x_1$};
    \fill (-1.29,-0.61) circle (2pt) node [below right] {$x_2$};
  \end{tikzpicture} 
  \caption{Disc with marked points and boundary conditions}  \label{fig:db}
\end{figure}
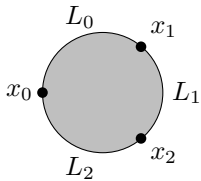
Whenever two of these Lagrangians agree, we instead count configurations consisting of holomorphic discs and (perturbed) gradient flow lines (see Figure~\ref{fig:dwg}).
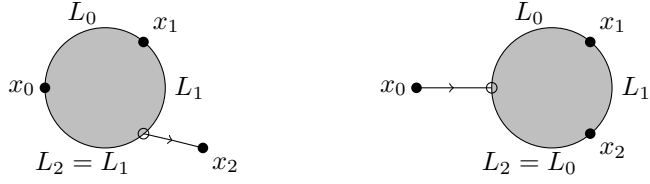
\begin{figure}[ht]
  \centering
  \begin{tikzpicture}
    \draw[fill=lightgray] (-1.8,0) circle (0.8);
    \draw (-2.1,+1.0) node {$L_0$};
    \draw (-1.0,0) node [right] {$L_1$};
    \draw (-2.1,-1.0) node {$L_2=L_1$};
    \fill (-2.6,0) circle (2pt) node [left] {$x_0$};
    \fill (-1.29,0.61) circle (2pt) node [above right] {$x_1$};
    \draw (-1.29,-0.61) circle (2pt);
    \draw[->] (-1.29,-0.61) -- (-0.9,-0.7);
    \draw (-0.9,-0.7) -- (-0.5,-0.8) node [below right] {$x_2$};
    \fill (-0.5,-0.8) circle (2pt);   
  \end{tikzpicture} {$\qquad\qquad$}
  \begin{tikzpicture}
    \draw[fill=lightgray] (-1.8,0) circle (0.8);
    \draw (-2.1,+1.0) node {$L_0$};
    \draw (-1.0,0) node [right] {$L_1$};
    \draw (-2.1,-1.0) node {$L_2=L_0$};
    \draw (-2.6,0) circle (2pt);
    \draw (-2.6,0) -- (-3.1,0);
    \draw[<-] (-3.1,0) -- (-3.6,0);
    \fill (-3.6,0) circle (2pt) node [left] {$x_0$};
    \fill (-1.29,0.61) circle (2pt) node [above right] {$x_1$};
    \fill (-1.29,-0.61) circle (2pt) node [below right] {$x_2$};
  \end{tikzpicture}   
  \caption{Discs with gradient trajectory attached} \label{fig:dwg}
\end{figure}
Applying this to the special case when $L_1 = L_2$, and two choices of almost complex structures $J_{01}$ and $J_{01}'$, we obtain a map
\begin{equation}
CM^*(L_1,L_1;\Z) \otimes  CF^*(L_0, L_1, J_{01}; \Z) \xrightarrow{} CF^*(L_0,L_1, J_{01}';\Z),
\end{equation}
where we temporarily introduce notation which incorporates the choice of almost complex structure.
If we input the class of the minimum in $ CM^*(L_1,L_1;\Z)  $, we obtain a map 
\begin{equation}
  CF^*(L_0, L_1, J_{01}; \Z) \xrightarrow{} CF^*(L_0,L_1, J_{01}';\Z)
\end{equation}
which is a \emph{continuation map} for almost complex structures.
\begin{lemma} \label{lem:continuation_chain_iso}
The continuation map for almost complex structures is a chain isomorphism.
\end{lemma}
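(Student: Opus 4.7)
The plan is to exploit the action filtration and show that $\mu_2(e,-)$ is upper triangular with diagonal entries $\pm \id$, whence invertibility is automatic. Observe first that the two complexes $CF^*(L_0, L_1, J_{01}; \Z)$ and $CF^*(L_0, L_1, J_{01}'; \Z)$ share the same underlying graded free $\Z$-module, generated by $|\ro_x|$ for $x \in L_0 \cap L_1$, and the same action filtration: the action $\cA(x) = f_1(x) - f_0(x)$ depends only on $L_0$, $L_1$ and the chosen primitives, not on the almost complex structure. Since the filtration is finite, it suffices to check both that $\mu_2(e,-)$ preserves the filtration and that it induces $\pm \id$ on each filtered quotient.

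Filtration preservation will come from the standard energy identity. Any matrix coefficient $\langle \mu_2(e, x), y\rangle$ is a weighted count of pairs $(u, \gamma)$, where $u$ is a holomorphic disc with two corners asymptotic to $x$ and $y$ and boundary alternately on $L_0$ and $L_1$ (recall $L_2 = L_1$ in this setup), and $\gamma$ is a gradient trajectory of $h$ on $L_1$ from the minimum $e$ to the remaining boundary marked point of $u$. Applying Stokes's theorem to $\omega = d\lambda$ and to the primitives $f_0, f_1$ gives $\int u^* \omega = \cA(y) - \cA(x)$ (using $\cA(e) = 0$), and tameness of $J$ forces this quantity to be non-negative, with equality only when $u$ is constant. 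In the constant case $u \equiv x$, the configuration collapses to a single gradient trajectory from $e$ to $x$ inside $L_1$; since $e$ is the unique minimum, its unstable manifold is all of $L_1$, so after a generic Morse--Smale perturbation there is exactly one such rigid trajectory. Comparing orientation lines yields a diagonal coefficient $\pm 1$, and upper-triangularity over a finite filtration gives invertibility.

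The main technical point I anticipate is the sign check in the constant-triangle contribution, namely verifying that the induced map $|\ro_x| \to |\ro_x|$ is indeed $\pm \id$ (and in particular a unit) after unwinding the orientation conventions of \cite{MR2441780}. Transversality for the mixed moduli of holomorphic discs with attached Morse trajectories is routine given generic $J$ and Morse--Smale pair $(h, g)$, and Gromov compactness causes no new difficulties in this exact setting. Note that this argument is stronger than what would be needed to merely compare Whitehead torsions (where chain homotopy, via Lemma~\ref{lem:Whitehead:2}, would suffice): here the map is literally a chain isomorphism on the nose.
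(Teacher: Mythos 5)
Your proof is correct and takes essentially the same approach as the paper: both argue that the map is upper triangular with respect to the action filtration (via Stokes's theorem and positivity of energy) and that the diagonal entries are $\pm 1$, coming from constant discs at the intersection points together with the unique rigid gradient trajectory to the minimum of $h$. You spell out the energy identity and the Morse-theoretic contribution in more detail than the paper, but the argument is the same.
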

\begin{proof}
At the chain level, the map is upper triangular with respect to the action filtration; it suffices therefore to show that the entries along the diagonal at $\pm 1$. These entries count constant holomorphic discs at the intersection points of $L_0$ with $L_1$  (together with a negative gradient flow line to the minimum).
\end{proof}
More generally, one can show that the unit $e \in HF^*(L,L;\Z)$ acts as a $2$-sided identity for the product.

The product is part of the $A_\infty$ structure on the Fukaya category, as discussed in \cite[Section 12]{MR2441780}. We shall only need the fact that it is homotopy commutative.  Concretely, the count of holomorphic discs with boundary mapping to a quadruple of Lagrangian branes $(L_0, L_1, L_2, L_3)$ defines a homotopy $\mu_3$ for the diagram
\begin{equation} \label{eq:Floer_product_associative}
  \xymatrix{ CF^*(L_2, L_3; \Z)  \otimes CF^*(L_1,L_2;\Z) \otimes CF^*(L_0, L_1; \Z) \ar[r] \ar[d] & \ar[d]  CF^*(L_2, L_3; \Z)  \otimes CF^*(L_0,L_2;\Z) \\
 CF^*(L_1,L_3;\Z) \otimes CF^*(L_0, L_1; \Z) \ar[r] &  CF^*(L_0,L_3;\Z).}
\end{equation}
This homotopy again preserves the action filtration.

\begin{definition}
A pair of Lagrangians $L_0$ and $L_1$ are \emph{Floer theoretically equivalent} if there exist cocycles $\alpha \in  CF^*(L_0,L_1;\Z)$ and $\beta \in  CF^*(L_1,L_0;\Z)$ such that 
\begin{align}
  [\beta] \cdot [\alpha] & = [e_0] \in HF^*(L_0, L_0; \Z) \\
 [\alpha] \cdot [\beta] & = [e_1] \in HF^*(L_1, L_1; \Z).
\end{align}
\end{definition}
Above, we have used the notation $\cdot$ to denote the  map  induced by $\mu_2$ on cohomology. The cycles $\alpha$ and $\beta$ are called (Floer theoretic) \emph{equivalences}.
\begin{remark} \label{rem:equivalence_unique}
  Whenever $L_0$ and $L_1$ are connected, the cohomology class of Floer theoretic equivalences are uniquely determined up to sign, because they induce isomorphisms
  \begin{equation}
    HF^*(L_0,L_1;\Z) \cong HF^*(L_0,L_0; \bZ) \cong HF^*(L_1,L_0;\Z),
  \end{equation}
and the group $ HF^*(L_0,L_0; \bZ)$ is isomorphic to $H^*(L; \bZ)$, which is of course of rank-$1$ in degree $0$. 
\end{remark}

Assuming for simplicity that $\alpha$ and $\beta$ are represented by unique intersection points $x$ and $y$, the condition that $L_0$ and $L_1$ are Floer-theoretically equivalent can be equivalently stated as the fact that the signed count of elements of $\cM(x,y)$ whose boundary passes through any fixed point of $L_0$ is $1$, as is the count of such elements passing through a fixed point of $L_1$ (see Figure~\ref{fig:pass}).
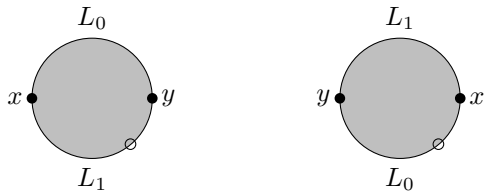
\begin{figure}[ht]
  \centering
  \begin{tikzpicture}
    \draw[fill=lightgray] (-1.8,0) circle (0.8);
    \draw (-1.8,0.8) node [above] {$L_0$};
    \draw (-1.8,-0.8) node [below] {$L_1$};
    \fill (-2.6,0) circle (2pt) node [left] {$x$};
    \fill (-1.0,0) circle (2pt) node [right] {$y$};
    \draw (-1.29,-0.61) circle (2pt);
  \end{tikzpicture}  {$\qquad\qquad$}
  \begin{tikzpicture}
    \draw[fill=lightgray] (-1.8,0) circle (0.8);
    \draw (-1.8,0.8) node [above] {$L_1$};
    \draw (-1.8,-0.8) node [below] {$L_0$};
    \fill (-2.6,0) circle (2pt) node [left] {$y$};
    \fill (-1.0,0) circle (2pt) node [right] {$x$};
    \draw (-1.29,-0.61) circle (2pt);
  \end{tikzpicture}
  \caption{The circled point at the boundary goes through a fixed point on the Lagrangian.} \label{fig:pass}
\end{figure}
In the general case, one requires instead that a linear combination of such counts equal $1$.

It follows immediately from the associativity of multiplication at the cohomological level that Floer theoretic equivalence is transitive. We record a precise formulation for future use:
\begin{lemma} \label{lem:equivalence_transitive}
If $(\alpha_1, \beta_1)$ and $(\alpha_2,\beta_2)$ are Floer theoretic equivalences for pairs $(L_0, L_1)$ and $(L_0, L_2)$, then $(\alpha_2 \cdot \alpha_1, \beta_1 \cdot \beta_2 )$ are Floer theoretic equivalences for the pair  $(L_0, L_2)$.
\end{lemma}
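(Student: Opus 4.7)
The plan is to read the statement (interpreting the evident typo) as: if $(\alpha_1,\beta_1)$ is a Floer theoretic equivalence for $(L_0,L_1)$ and $(\alpha_2,\beta_2)$ is one for $(L_1,L_2)$, then the chain-level products $\alpha_2\cdot\alpha_1\in CF^*(L_0,L_2;\Z)$ and $\beta_1\cdot\beta_2\in CF^*(L_2,L_0;\Z)$ assemble into an equivalence for $(L_0,L_2)$. These are both cocycles because $\mu_2$ is a chain map and both factors in each product are cocycles, so the proof reduces to verifying the two cohomological identities $[\beta_1\cdot\beta_2]\cdot[\alpha_2\cdot\alpha_1]=[e_0]$ and $[\alpha_2\cdot\alpha_1]\cdot[\beta_1\cdot\beta_2]=[e_2]$.

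Both identities follow directly from associativity of the product on cohomology (a consequence of the homotopy $\mu_3$ displayed in diagram \eqref{eq:Floer_product_associative}), combined with the already noted fact that the unit $e_i\in HF^*(L_i,L_i;\Z)$ acts as a two-sided identity. Concretely, one computes
\begin{align*}
[\beta_1\cdot\beta_2]\cdot[\alpha_2\cdot\alpha_1]
&= [\beta_1]\cdot\bigl([\beta_2]\cdot[\alpha_2]\bigr)\cdot[\alpha_1]
= [\beta_1]\cdot[e_1]\cdot[\alpha_1]
= [\beta_1]\cdot[\alpha_1]
= [e_0],
\end{align*}
where the first equality uses associativity twice and the second invokes the hypothesis that $(\alpha_2,\beta_2)$ is an equivalence for $(L_1,L_2)$. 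The symmetric computation
\begin{align*}
[\alpha_2\cdot\alpha_1]\cdot[\beta_1\cdot\beta_2]
= [\alpha_2]\cdot\bigl([\alpha_1]\cdot[\beta_1]\bigr)\cdot[\beta_2]
= [\alpha_2]\cdot[e_1]\cdot[\beta_2]
= [\alpha_2]\cdot[\beta_2]
= [e_2]
\end{align*}
gives the other identity.

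There is no real obstacle here: the entire content is that on cohomology the Floer product is associative and unital, both of which are recorded above (associativity from the $\mu_3$-homotopy, unitality from the consequence of Lemma~\ref{lem:continuation_chain_iso} stated in the paragraph preceding the definition of Floer equivalence). The only thing worth flagging is that the statement is genuinely about the classes $[\alpha_2\cdot\alpha_1]$ and $[\beta_1\cdot\beta_2]$, i.e.\ about particular cocycle representatives chosen at the chain level; since an equivalence only requires the existence of cocycles satisfying the two identities on cohomology, these representatives suffice.
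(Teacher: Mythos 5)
Your proof is correct and matches the paper's intent: the paper simply remarks that transitivity is ``immediate from the associativity of multiplication at the cohomological level'' and leaves the verification to the reader, and your computation is exactly that verification. You also correctly repaired the typo in the statement (the second pair should be $(L_1, L_2)$, not $(L_0, L_2)$, for the products to be defined).
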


The most geometric way of producing Floer theoretic equivalences arises from Hamiltonian isotopies $\{ K_t \}_{t \in [0,1]}$. In this case, Floer's invariance result implies \cite{MR965228} that the Lagrangian Floer cohomology is isomorphic to ordinary cohomology, hence we have: 
\begin{align} \label{eq:6}
  HF^0(K_0,K_1;\Z) \cong \Z \cong HF^0(K_1,K_0;\Z) .
\end{align}
If the isotopy is sufficiently $C^2$-small, a minor variant of Floer's argument implies that the product
\begin{equation}
 HF^0(K_0,K_1;\Z)  \otimes  HF^0(K_1,K_0;\Z) \to HF^0(K_0,K_0;\Z) 
\end{equation}
is an isomorphism, hence that $K_0$ and $K_1$ are Floer-theoretically isomorphic. Decomposing an arbitrary path into a concatenation of short ones, and using Lemma \ref{lem:equivalence_transitive},  we conclude that any two Hamiltonian isotopic Lagrangians are Floer theoretically equivalent.
\begin{remark}
We note that Floer's original argument for invariance was rather delicate, using bifurcation analysis at birth-death singularities. This can be replaced by the most abstract method of continuation maps, or moving Lagrangian boundary condition \cite[Section 8k]{MR2441780}: one can define an cocyle in $CF^0(K_1,K_0;\Z)$ by counting holomorphic discs with boundary conditions given by the path $\{ K_t \}_{t \in [0,1]}$. A path of boundary conditions gives a homotopy between the cocyclies defined by its endpoints, so that the corresponding element of Floer cohomology is well-defined. The proof that such elements give rise to Floer theoretic isomorphisms amounts to the fact that the concatenation of a path with the inverse path is homotopic to the constant path.
\end{remark}

\subsection{Floer chains with coefficients in the group ring of $M$}
\label{sec:floer-chains-with}

Let us now, fix a base-point $\star \in M$ and consider the local coefficient system $R$ on $M$ whose value at a point $x$ is the free abelian group $R_x$ on the components of the space of paths from $\star$ to $x$. The fibre at the basepoint is the group ring $\Z[\pi_1(M, \star)]$, which acts on all other fibres by concatenation. We shall define a Floer \emph{homology group} with values in the coefficient system $R$.

Given transverse Lagrangian branes $L_0$ and $L_1$ as before, the underlying graded $R$ module for this vector space is
\begin{align} \label{eq:4}
  CF_{-i}(L_0,L_1;R) = \bigoplus_{\substack{x\in K\cap L \\ \deg(x) = i}} |\ro_x|^{\vee} \otimes R_x,
\end{align}
where $ |\ro_x|^{\vee} $ is the linear dual of the graded line $|\ro_x|$ appearing in Equation \eqref{eq:Floer_cochains}. The differential is defined as before by counting rigid elements of moduli spaces of rigid strips $\cM(x,y)$, noting that such a map $u$ induces (by concatenation with the path $u(\bR \times \{t\})$) a $\Z[\pi_1(M, \star)] $-linear map
\begin{equation*}
R_x \to R_y.
\end{equation*}
Taking the tensor product with the map induced by Equation \eqref{eq:map_induced_strip} on dual orientation lines, we obtain 
\begin{align*}
  \partial_u : |o_x|^{\vee} \otimes R_x \to |o_y|^\vee \otimes R_y.
\end{align*}

By construction, Floer chains are equipped with a filtration by subcomplexes
\begin{equation}
   F^a C_*(L_0,L_1;R) = \bigoplus_{\substack{x\in K\cap L \\ \cA(x) \leq -a}} |\ro_x|^{\vee} \otimes R_x.
\end{equation}



We extend this definition as before to the case $L_0 = L = L_1 $ by picking a Morse function $h$ on $L$, and defining
\begin{equation}
CF_*(L, L; R) = CM_{-*}(h ; R).
\end{equation}
In other words, a generator of this complex corresponds to a choice of a homotopy class of paths from a critical point of $h$ to the basepoint in $M$. The filtration on the self-Floer group is trivial, i.e. supported at the zero level.

The construction of operations on the Fukaya category naturally extends to give rise to an $A_\infty$ bimodule structure on such groups. Of key importance are the first two structure maps ($\rho$ and $\lambda$ stand for right and left multiplication)
\begin{align*}
 CF^*(L_0,L_{1};\Z)  \otimes  CF_*(L_0,L_2;R)    & \xrightarrow{\lambda} CF_*(L_1,L_2;R) \\
 CF_*(L_2,L_0;R) \otimes  CF^*(L_1,L_0;\Z)   & \xrightarrow{\rho} CF_*(L_2,L_1;R).
\end{align*}
These maps are induced by the moduli spaces of holomorphic triangles as follows: given intersection points   $x_0 \in L_0 \cap L_2$,  $x_1 \in L_0 \cap L_1$ and $ x_2 \in L_1 \cap L_2$, any element of the moduli space $\cM(x_0, x_1, x_2)$ determines a canonical homotopy class of paths from $x_0$ to $x_2$. Left and right multiplication are then defined by taking the tensor product of the induced map $R_{x_0} \to R_{x_2}$ with the map induced on orientation lines. By construction, these are maps of filtered complexes.

The right action is compatible with the product $\mu_2$ in the sense that,  given a quadruple $(L_0, L_1, L_2, L_3)$, the count of holomorphic discs with $4$ marked points defines a filtered homotopy in the following diagram:
\begin{equation} \label{eq:Floer_homology_right_module}
  \xymatrix{ CF_*(L_3, L_0; R)  \otimes  CF^*(L_1, L_0; \Z) \otimes CF^*(L_2,L_1;\Z) \ar[r] \ar[d] & \ar[d]  CF_*(L_3, L_0; R)  \otimes CF^*(L_2,L_0;\Z) \\
 CF_*(L_3,L_1;R) \otimes CF^*(L_2, L_1; \Z) \ar[r] &  CF_*(L_3,L_2;R).}
\end{equation}

The left action is similarly compatible via a diagram which we omit. Finally, we shall use the fact that the left and right actions are homotopically compatible with each other, i.e. we obtain a bimodule by passing to cohomology. Concretely, this is given by the homotopy in the diagram
\begin{equation} \label{eq:Floer_homology_bimodule}
  \xymatrix{ CF^*(L_0, L_1; \Z)  \otimes CF_*(L_0,L_3;R) \otimes CF^*(L_2, L_3; \Z) \ar[r] \ar[d] & \ar[d] CF_*(L_1,L_3;R) \otimes  CF^*(L_2, L_3; \Z)   \\
 CF^*(L_0, L_1; \Z)  \otimes CF_*(L_0,L_2;R) \ar[r] &  CF_*(L_1,L_2;R).}
\end{equation}

\subsection{The torsion of a Floer theoretic equivalence}
\label{sec:tors-floer-theor}

Consider a pair of connected Lagrangians $K$ and $Q$, and cocycles $\alpha \in CF^*(K,Q;\Z)$ and $\beta \in  CF^*(Q,K;\Z)$ which give rise to a Floer theoretic equivalence.

Using the bimodule structure, we obtain chain homotopy equivalences
\begin{align} \label{eq:1}
  \lambda(\alpha \otimes - ) : CF_*(K,L;R) & \to CF_*(Q,L;R) \\
   \lambda(\beta \otimes - ): CF_*(Q,L;R) & \to CF_*(K,L;R) \label{eq:8}
\end{align}
which are quasi-inverses, and similarly for the maps induced by $\rho(-\otimes\alpha)$ and $\rho(-\otimes\beta)$.

\begin{definition}
The \emph{Floer-theoretic torsion} of a pair $(K, L)$ of Floer-theoretically equivalent Lagrangians is the torsion of the map
\begin{equation}
CF_*(L,L; R) \to   CF_*(K,K; R)
\end{equation}
induced by left and right multiplication by equivalences.
\end{definition}

Explicitly, if $\alpha \in CF^*(L, K; \bZ)$ and $\beta \in  CF^*(K, L; \bZ)$ are equivalences, then we consider the composition
\begin{equation}
CF_*(L,L; R)\xrightarrow{\lambda(\alpha \otimes - )}   CF_*(K,L; R) \xrightarrow{\rho(- \otimes \beta)}  CF_*(K,K; R).
\end{equation}

\begin{lemma}
  \label{cor:FloerWhitehead:1}
The Whitehead torsion of $\lambda(\alpha \otimes - )$ is independent of the choice of Floer theoretic equivalence and almost complex structure. Similarly for map $\rho(-\otimes\alpha)$.
\end{lemma}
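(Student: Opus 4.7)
We treat $\lambda(\alpha \otimes -)$; the argument for $\rho(-\otimes \alpha)$ is entirely analogous. The plan is to isolate the two sources of ambiguity: (i) the cocycle $\alpha$ (with fixed Floer data), and (ii) the almost complex structures and perturbation data used to define the various complexes and the operation $\lambda$.

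For (i), recall from Remark \ref{rem:equivalence_unique} that any two Floer theoretic equivalences $\alpha$, $\alpha'$ represent the same cohomology class in $HF^*(L,K;\Z)$ up to sign. The replacement $\alpha \mapsto -\alpha$ merely negates the induced map $\lambda(\alpha \otimes -)$, which is realised by a simple base change (multiplication of each generator of the target by $-1$) and hence leaves Whitehead torsion unchanged. If instead $\alpha' - \alpha = d\gamma$ for some $\gamma \in CF^*(L,K;\Z)$, the $A_\infty$ relation controlling $\lambda$ yields a chain homotopy from $\lambda(\alpha \otimes -)$ to $\lambda(\alpha' \otimes -)$ given by $\lambda(\gamma \otimes -)$, so Lemma \ref{lem:Whitehead:2} gives equality of torsions. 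The same lemma absorbs the ambiguity in the perturbation data defining $\lambda$, since different choices yield chain homotopic operations.

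For (ii), given two admissible almost complex structures $J$ and $J'$, Lemma \ref{lem:continuation_chain_iso} provides continuation chain isomorphisms $\Phi\co CF_*(K,L;R,J) \to CF_*(K,L;R,J')$ and analogously an isomorphism $\Phi'$ for $(Q,L)$. Being upper triangular with $\pm 1$ on the diagonal with respect to the action filtration, the mapping cone of $\Phi$ admits a filtration whose graded pieces are mapping cones of signed identity maps, each with trivial Whitehead torsion; Lemma \ref{lem:Whitehead:1} then yields $\tau(\Phi) = 0$, and similarly $\tau(\Phi') = 0$. The $A_\infty$ compatibility of $\lambda$ with continuations (a relative of the homotopy in diagram \eqref{eq:Floer_homology_bimodule}) produces a chain homotopy commutative square
\begin{equation*}
\xymatrix{
CF_*(K,L;R,J) \ar[r]^-{\lambda(\alpha_J \otimes -)} \ar[d]_-{\Phi} & CF_*(Q,L;R,J) \ar[d]^-{\Phi'} \\
CF_*(K,L;R,J') \ar[r]_-{\lambda(\alpha_{J'} \otimes -)} & CF_*(Q,L;R,J'),
}
\end{equation*}
where $\alpha_{J'}$ is the continuation image of $\alpha_J$, which is itself a Floer theoretic equivalence by compatibility of continuations with $\mu_2$. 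Combining Lemma \ref{lem:Whitehead:2} (applied to the homotopy commutativity) with the composition formula \eqref{eq:5} and the vanishing of continuation torsions gives $\tau(\lambda(\alpha_J \otimes -)) = \tau(\lambda(\alpha_{J'} \otimes -))$. Composing with (i), which identifies $\tau(\lambda(\alpha_{J'} \otimes -))$ with the torsion computed from any other equivalence in $CF^*(L,K;\Z,J')$, yields total independence.

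The principal obstacle is arranging the homotopy commutativity of the square with enough care to invoke \eqref{eq:5}: this requires specifying a continuation $A_\infty$ bimodule morphism and verifying that the ensuing chain homotopy preserves the action filtration, so that Lemma \ref{lem:Whitehead:1} applies cleanly to the relevant mapping cones throughout. Once this technical framework is in place, the vanishing of continuation torsion via the action-filtration argument is essentially automatic.
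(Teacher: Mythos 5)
Your proof is correct and follows essentially the same route as the paper: handle the dependence on the cocycle via Remark \ref{rem:equivalence_unique}, Lemma~\ref{lem:Whitehead:2}, and a simple base change for the sign; handle the dependence on $J$ by showing the continuation map has trivial torsion (it is filtration-preserving and the identity on the associated graded) and then invoking the composition formula \eqref{eq:5} together with Lemma~\ref{lem:Whitehead:2}. Your invocation of Lemma~\ref{lem:Whitehead:1} for the vanishing of the continuation torsion is in fact the cleaner citation — the paper's text points to Lemma~\ref{lem:Whitehead:2} at that step, but the filtered-quotient argument of Lemma~\ref{lem:Whitehead:1} is what is actually doing the work.
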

\begin{proof}
Changing $\alpha$ to $\alpha + d\alpha'$: the map $\lambda(\alpha' \otimes -)$ provides a chain homotopy between $\lambda(\alpha \otimes -)$ and $\lambda((\alpha + d\alpha') \otimes -)$, which by Lemma~\ref{lem:Whitehead:2} does not change the Whitehead torsion. By Remark \ref{rem:equivalence_unique}, a Floer theoretic equivalences is uniquely determined up to sign, which does not affect the torsion.

The continuation map for complex structures is defined using the first picture in Figure~\ref{fig:dwg}. The resulting matrix, with respect to the action filtration, is upper triangular with entries $1$ along the diagonal as in the proof of Lemma \ref{lem:continuation_chain_iso}. Combined with Lemma~\ref{lem:Whitehead:2} we see that the Whitehead torsion of the continuation map is trivial. Since composing $\lambda(\alpha \otimes -)$ with a continuation map gives a chain homotopic map, Lemma~\ref{lem:Whitehead:2} and Equation~\eqref{eq:5} prove that the torsion is independent of the choice of almost complex structure.
\end{proof}

\begin{corollary}
The Floer-theoretic torsion is independent of the choice of equivalences and almost complex structures.
\end{corollary}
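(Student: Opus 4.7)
The plan is to derive the corollary directly from Lemma~\ref{cor:FloerWhitehead:1} by combining it with the additivity of Whitehead torsion under composition recorded in Equation~\eqref{eq:5}. Unwinding the definition, the Floer-theoretic torsion is the class of the composition
\begin{equation*}
CF_*(L,L;R) \xrightarrow{\lambda(\alpha \otimes -)} CF_*(K,L;R) \xrightarrow{\rho(-\otimes \beta)} CF_*(K,K;R),
\end{equation*}
where $\alpha \in CF^*(L,K;\Z)$ and $\beta \in CF^*(K,L;\Z)$ are Floer-theoretic equivalences. Applying Equation~\eqref{eq:5} gives
\begin{equation*}
\tau\bigl(\rho(-\otimes \beta) \circ \lambda(\alpha \otimes -)\bigr) = \tau(\lambda(\alpha \otimes -)) + \tau(\rho(-\otimes \beta)),
\end{equation*}
so it suffices to show that each summand is independent of the choice of $(\alpha,\beta)$ and of the almost complex structures used to define the various Floer complexes and the bimodule operations. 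This is precisely the content of Lemma~\ref{cor:FloerWhitehead:1} applied to each factor, once we know that any two cocyclic representatives of an equivalence are chain-homotopic up to sign (Remark~\ref{rem:equivalence_unique}) so that Lemma~\ref{lem:Whitehead:2} kicks in.

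The only nontrivial point to verify is that when the auxiliary data defining the intermediate complex $CF_*(K,L;R)$ changes, the two compositions are identified compatibly. Here I would argue as follows: a change of almost complex structure for $CF_*(K,L;R)$ induces a continuation isomorphism whose Whitehead torsion vanishes (by the upper-triangular argument from the proof of Lemma~\ref{lem:continuation_chain_iso}, precisely as in Lemma~\ref{cor:FloerWhitehead:1}); and by the homotopical compatibility of the left and right actions recorded in diagram~\eqref{eq:Floer_homology_bimodule}, this continuation isomorphism intertwines $\lambda(\alpha \otimes -)$ with its counterpart for the new data up to chain homotopy, and likewise for $\rho(-\otimes \beta)$. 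A final application of \eqref{eq:5} and Lemma~\ref{lem:Whitehead:2} then shows that the sum $\tau(\lambda(\alpha \otimes -)) + \tau(\rho(-\otimes \beta))$ is unchanged.

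A similar but simpler argument handles a change of the perturbation data for the moduli spaces of triangles defining $\lambda$ and $\rho$ themselves: two such choices give chain-homotopic maps, and Lemma~\ref{lem:Whitehead:2} identifies their torsion. The only step I expect to require any thought at all is the bookkeeping in the previous paragraph — matching continuation maps on different sides of the composition against the bimodule homotopy — but even this is routine given the filtered $A_\infty$-bimodule structure already set up, and no new geometric input is required beyond what has been established.
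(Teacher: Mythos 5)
Your proposal is correct and matches the argument the paper leaves implicit: decompose the Floer-theoretic torsion via Equation~\eqref{eq:5} into $\tau(\lambda(\alpha\otimes-)) + \tau(\rho(-\otimes\beta))$ and apply Lemma~\ref{cor:FloerWhitehead:1} to each summand. The extra bookkeeping you carry out for a simultaneous change of data on the intermediate complex $CF_*(K,L;R)$ is a reasonable sanity check, but it is already absorbed into Lemma~\ref{cor:FloerWhitehead:1}, which asserts independence of each factor's torsion on its own; once that is granted, \eqref{eq:5} finishes the job with no further matching required.
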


\subsection{Torsion is Hamiltonian invariant}

We shall presently strengthen Lemma \ref{cor:FloerWhitehead:1} by showing  that torsion is in fact invariant under Hamiltonian isotopies. To this end, we consider Hamiltonian isotopic Lagrangians $K_0$ and $K_1$, and a fixed Lagrangian $L$  which is transverse to both. We begin by choosing a generic Hamiltonian isotopy  $\{ K_t \}_{t \in I}$ so that the following conditions hold:
  \begin{enumerate}
  \item The Lagrangians $K_t$  and $K_s$ are transverse for almost all $(s,t)\in I\times I$.
  \item There are finitely many $t\in I$ for which $K_t\cap L$ is non-transverse.
  \item For each $t\in I$ there are finitely many points in $K_t\cap L$.
  \end{enumerate} 
Consider an arbitrary member  $K_t$ in this isotopy. Choosing $\epsilon$ sufficiently small, we may assume that $K_{t \pm \epsilon} \cap L$ is arbitrarily close to $K_{t} \cap L$. In particular, since the intersection points are isolated, each intersection point of  $K_{t \pm \epsilon}$ with $ L$ is close to a unique intersection point of $K_t$ with $L$. We obtain a decomposition:
\begin{equation}
  K_{t\pm \epsilon} \cap L = \bigcup_{x \in  K_t\cap L} B^{\pm}_{x},
\end{equation}
where $ B^{\pm}_{x}$ consists of those intersection points which are close to $x$. 
\begin{lemma} \label{lem:bound_action_nearby}
For each $t \in [0,1]$ and  $\delta > 0$ which is sufficiently small, there is a constant $\epsilon$ so that 
\begin{itemize}
\item $K_{t + \epsilon}$ and $K_{t - \epsilon}$ are $\delta$-close to $K_t$ and mutually transverse, with $|\cA(z)| \ll \delta$  for all $z \in K_{t + \epsilon} \cap K_{t - \epsilon}$.
\item For any intersection point $x_{\pm} \in B^{\pm}_{x}$, we have $| \cA(x_{\pm}) - \cA(x)|  \ll  \delta$.
\end{itemize}
\end{lemma}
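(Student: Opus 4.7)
The plan is to exploit the smooth dependence of the primitives $f_t \co K_t \to \bR$ on the Hamiltonian parameter. Concretely, if $K_t = \phi^H_t(K_0)$ for a time-dependent Hamiltonian $H$, then one may normalize the primitives so that
\[
f_t \circ \phi^H_t - f_0 = \int_0^t \bigl(\lambda(X_{H_s}) - H_s\bigr)\circ \phi^H_s \, ds.
\]
This formula shows that, regarded as functions on a fixed tubular neighborhood of $K_t$ via the flow, the family $\{f_{t'}\}$ varies smoothly in $t'$. Hence there is a constant $C$ so that $|f_{t'}(y) - f_t(y)| \leq C|t'-t|$ uniformly for $y$ in a compact neighborhood of $K_t$.

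For the first bullet, invoke genericity condition (1) to find arbitrarily small $\epsilon$ for which $K_{t+\epsilon}$ and $K_{t-\epsilon}$ are transverse; smoothness of the isotopy ensures that both $K_{t\pm\epsilon}$ are automatically $\delta$-close to $K_t$ once $\epsilon$ is small. For any $z \in K_{t+\epsilon}\cap K_{t-\epsilon}$, viewing the two Lagrangians as an ordered pair with their respective primitives, the action is the difference
\[
\cA(z) = f_{t+\epsilon}(z) - f_{t-\epsilon}(z),
\]
which is bounded by $2C\epsilon$. Taking $\epsilon \ll \delta/C$ then gives $|\cA(z)|\ll\delta$.

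For the second bullet, note that genericity conditions (2)--(3) ensure that $K_t\cap L$ is a finite set and that, for $\epsilon$ outside a finite set of bad values, $K_{t\pm\epsilon}$ is transverse to $L$. For small $\epsilon$, an implicit function theorem argument shows that each intersection point $x_\pm \in B^\pm_x$ is $O(\epsilon)$-close to $x$ in $M$. Writing
\[
\cA(x_\pm) - \cA(x) = \bigl(f_{t\pm\epsilon}(x_\pm) - f_t(x)\bigr) - \bigl(f_L(x_\pm) - f_L(x)\bigr),
\]
each parenthesized term is $O(\epsilon)$: the second by smoothness of $f_L$ together with the distance bound, and the first by the joint smoothness of $f_{t'}(y)$ in its two arguments. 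Shrinking $\epsilon$ further yields the required bound.

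The main subtlety will be to choose a single $\epsilon$ that simultaneously satisfies all of the transversality conditions together with the quantitative bounds. The quantitative bounds hold on an open neighborhood of $\epsilon = 0$, while the transversality conditions fail only on a nowhere dense set near $0$ by conditions (1)--(2), so admissible $\epsilon$ of arbitrarily small magnitude exist.
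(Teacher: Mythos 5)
Your proposal is correct and takes the same route as the paper: the paper's proof is essentially a one-line appeal to continuity of the action and of the intersection points in $\epsilon$, together with genericity to ensure the transversality requirements, and your argument supplies the explicit smooth dependence of the primitives on $t$ (via the standard formula for $f_t\circ\phi^H_t$) that underlies that continuity.
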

\begin{proof}
Given any $t \in [0,1]$,  we can choose $\epsilon$ sufficiently small so that the Lagrangians $(K_{t + \epsilon}, L, K_{t - \epsilon})$ are pairwise transverse. Since the action and the distance to an intersection point of $L$ with $K_t$ are continuous in $\epsilon$, the desired condition can be achieved for $\epsilon$ sufficiently small.
\end{proof}
The above result, together with the monotonicity lemma, gives the following constraint on moduli spaces of holomorphic triangles:
\begin{corollary} \label{cor:no_triangles_for_action}
Given distinct intersection points $x$ and $y$ of $L$ and $K_t$, with $\cA(x) \leq \cA(y)$, and $\epsilon$ sufficiently small, the moduli space $\cM(x_+,z,y_-)$ is empty for each $z \in K_{t-\epsilon} \cap  K_{t + \epsilon}$, $x_+ \in B^{+}_{x}$, and $y_- \in  B^{-}_{y} $.  Moreover, 
there is a contractible neighbourhood $U_x$ of each intersection point $x$ such that if $x_\pm \in B^{\pm}_{x}$ then any element of  $\cM(x_+,z,x_-)$ is homotopic to a map into $U_x$ relative the two marked points which map to $x_+$ and $x_-$. 
\end{corollary}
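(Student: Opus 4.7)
The plan is to combine two standard tools in Floer theory: an action identity coming from Stokes' theorem applied to exact boundary conditions, and the monotonicity lemma for $J$-holomorphic curves. The action identity will give an upper bound on the symplectic area of any triangle in $\cM(x_+,z,y_-)$ in terms of $\cA(x)-\cA(y)$ and an error of size $O(\delta)$; the monotonicity lemma will give a lower bound in terms of the geometric size of the image. Playing these bounds against each other produces the emptiness in the distinct-point case and the localization in the case $x=y$.

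First, I would apply Stokes' theorem to any $u\in\cM(x_+,z,y_-)$, using primitives $f_L$, $f_{t+\epsilon}$, $f_{t-\epsilon}$ for the restrictions of $\lambda$ to the three Lagrangians. This produces an identity of the form
\begin{equation*}
\int u^*\omega \;=\; \pm\cA(x_+)\pm\cA(y_-)\pm\cA(z),
\end{equation*}
where the signs depend on the ordering of the vertices but not on $u$. By Lemma~\ref{lem:bound_action_nearby}, each of $\cA(x_+)$, $\cA(y_-)$, $\cA(z)$ differs from $\cA(x)$, $\cA(y)$, $0$ respectively by a quantity much smaller than $\delta$, so that
\begin{equation*}
\int u^*\omega \;=\; \cA(x)-\cA(y)+O(\delta).
\end{equation*}
Since $u$ is $J$-holomorphic the left-hand side is nonnegative, and since we assumed $\cA(x)\leq\cA(y)$ the right-hand side is at most $O(\delta)$. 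Hence every element of $\cM(x_+,z,y_-)$ has symplectic area bounded by $O(\delta)$.

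For the emptiness statement, suppose $x\neq y$. Fix a Riemannian metric compatible with $J$. Because $x_+\in B^+_x$ and $y_-\in B^-_y$ lie within $\delta$ of $x$ and $y$ respectively, for $\delta$ small enough their distance is bounded below by $\tfrac12\dist(x,y)$, which is a positive constant depending only on $t$. The monotonicity lemma for $J$-holomorphic curves with Lagrangian boundary then yields a uniform lower bound $c\cdot\dist(x,y)^2$ on $\int u^*\omega$ for any such $u$, since the image of $u$ must contain both boundary marked points. Choosing $\epsilon$ (and hence the allowed $\delta$) small enough that the $O(\delta)$ upper bound is strictly less than this geometric lower bound yields a contradiction, proving $\cM(x_+,z,y_-)=\emptyset$. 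By finiteness of $K_t\cap L$ one can make this choice uniformly in $x$ and $y$.

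For the second assertion, take $x=y$, and pick a Darboux ball $U_x$ around $x$ which is contractible and in which $L\cap U_x$, $K_{t\pm\epsilon}\cap U_x$ are (for $\epsilon$ small) graphs of closed one-forms, so that the inclusion $U_x\hookrightarrow M$ identifies the three local branches with linear Lagrangians up to a small perturbation. The same area identity now gives $\int u^*\omega =O(\delta)$, and monotonicity applied at each point of the image forces the whole image of $u$ to lie within distance $O(\sqrt\delta)$ of some point on $\partial u$; since $x_+,x_-\in B^\pm_x$ both lie within $\delta$ of $x$, this image is contained in $U_x$ once $\epsilon$ is sufficiently small. Contractibility of $U_x$ then makes any two such $u$ homotopic rel the marked points mapping to $x_+$ and $x_-$, and in particular homotopic to any reference map with image in $U_x$. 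The main technical obstacle is ensuring that the constants in the monotonicity lemma and the $\epsilon$--$\delta$ comparisons can be chosen uniformly as $t$ varies in $[0,1]$; this is handled by compactness of $[0,1]$ together with the transversality assumptions on the isotopy $\{K_t\}$.
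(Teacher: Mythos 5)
Your overall two-step structure (Stokes identity bounding energy from above, monotonicity bounding it from below) matches the paper's. But both halves of your monotonicity argument have genuine gaps.

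For the emptiness claim, you invoke a generic ``monotonicity lemma for $J$-holomorphic curves with Lagrangian boundary'' to conclude $\int u^*\omega \geq c\cdot\dist(x,y)^2$ from the fact that the image contains both $x_+$ and $y_-$. This is not a standard consequence of monotonicity, and the claimed form of the bound is wrong. The difficulty is that the disc boundary lies on \emph{three} Lagrangians $L$, $K_{t+\epsilon}$, $K_{t-\epsilon}$, and near $x_+$ and $y_-$ (and potentially along long stretches) these Lagrangians are very close to each other. A thin wedge hugging two nearly-coincident Lagrangians can carry a long boundary path with tiny area, so ``the boundary points are far apart'' does not by itself produce an area lower bound. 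The paper circumvents this with the tailored Lemma~\ref{lem:FloerWhitehead:3}: the disc must link a codimension-two submanifold $W$ chosen in a Darboux chart around $x$, disjoint from (small neighbourhoods of) the Lagrangians; one then applies \emph{interior} monotonicity at the intersection with $W$. The resulting lower bound is a constant $\delta$ depending on the fixed neighbourhood $U_x$, not on $\dist(x,y)$; the order of choices (first $U_x$ and $\delta$, then $\epsilon$) is what makes the argument close.

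For the second claim, you argue that small area forces the image of $u$ to lie within $O(\sqrt{\delta})$ of $\partial u$ and hence inside $U_x$. The first step is fine, but the conclusion does not follow: the boundary of $u$ includes the corner $z\in K_{t+\epsilon}\cap K_{t-\epsilon}$, which has small \emph{action} but may be located anywhere along $K_t$, far from $x$. So the image of $u$ need not be contained in $U_x$; only its \emph{homotopy class rel the two marked points} is controlled. This is precisely why the paper's Corollary~\ref{cor:Monotonicity:1} is formulated about homotopy classes and proved by passing to the universal cover: if the path from $x_+$ to $x_-$ were not homotopic into $U_x$, the lift of $u$ would join distinct preimages of $U_x$, and Lemma~\ref{lem:FloerWhitehead:3} applied upstairs would give an area lower bound, contradicting the Stokes estimate.

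In short, both halves of your proof quietly replace the paper's bespoke monotonicity result with arguments that would apply only to a single Lagrangian boundary condition or to an image that stays localized, neither of which holds here; you need something like Lemma~\ref{lem:FloerWhitehead:3} and Corollary~\ref{cor:Monotonicity:1} to make the bounds honest.
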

\begin{proof}
Choose disjoint neighbourhoods $U_x$ of the intersection points of $K_t$ with $L$, so that the conclusion of Lemma \ref{lem:FloerWhitehead:3} holds for $U_x$, then choose $\epsilon$ small enough so that $B^{\pm}_x \subset U_x$, and the conclusions of Lemma \ref{lem:bound_action_nearby} hold.

Whenever $x \neq y$, Stokes's theorem implies that the energy of any element of $\cM(x_+,z,y_-)$ is given by
\begin{multline}
  \cA(x_+) -  \cA(y_-)  -  \cA(z) = \\
   \left(\cA(x_+) - \cA(x)\right) + \left(\cA(x) - \cA(y)\right) + \left(\cA(y) - \cA(y_-)\right)  -  \cA(z)  \ll \delta,
\end{multline}
where the second inequality follows from the conclusion of Lemma \ref{lem:bound_action_nearby}, and the assumption that $ \cA(x) \leq \cA(y)$. On the other hand, Lemma \ref{lem:FloerWhitehead:3} implies that the energy must be greater than $\delta$, so the moduli space is empty.

The second statement follows from Corollary~\ref{cor:Monotonicity:1}.
\end{proof}



We now arrive at the main result of this section:
\begin{proposition}
  \label{lem:FloerWhitehead:1}
The  Whitehead torsion of a Floer theoretic equivalence of Hamiltonian isotopic Lagrangians is trivial.
\end{proposition}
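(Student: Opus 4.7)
The plan is to combine a concatenation argument for the Hamiltonian isotopy with Lemma~\ref{lem:Whitehead:1}, applied to the action filtration arising from the clustering of intersection points near $K_t \cap L$. First, decompose $\{K_t\}_{t \in [0,1]}$ as a concatenation of sub-isotopies between $K_{t_i}$ and $K_{t_{i+1}}$, each short enough that Lemma~\ref{lem:bound_action_nearby} and Corollary~\ref{cor:no_triangles_for_action} apply with respect to $L$. By Lemma~\ref{lem:equivalence_transitive} and the additivity of Whitehead torsion under composition (Equation~\eqref{eq:5}), it suffices to prove vanishing for each short sub-isotopy, so from now on I write $K_0 = K_{t-\epsilon}$ and $K_1 = K_{t+\epsilon}$.

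For such a short sub-isotopy, consider the chain equivalence $\lambda(\alpha \otimes -) : CF_*(K_0, L; R) \to CF_*(K_1, L; R)$ induced by an equivalence $\alpha$. The intersections $K_0 \cap L$ and $K_1 \cap L$ split into clusters $B^-_x$ and $B^+_x$ indexed by $x \in K_t \cap L$. Ordering the clusters by decreasing value of $\cA(x)$, the first conclusion of Corollary~\ref{cor:no_triangles_for_action} forces $\lambda(\alpha \otimes -)$ to be block upper triangular: the matrix entry from $B^-_{x'}$ to $B^+_x$ vanishes whenever $x \neq x'$ and $\cA(x) \leq \cA(x')$. Consequently its mapping cone inherits a filtration whose associated graded decomposes as a direct sum of local mapping cones $C_x$ indexed by $x \in K_t \cap L$.

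For each $x$, the second conclusion of Corollary~\ref{cor:no_triangles_for_action} confines the holomorphic triangles contributing to $C_x$ to the contractible neighborhood $U_x$. Contractibility trivializes the path data in the coefficient system $R$, so $C_x$ reduces to a local Floer model for two $C^2$-close Lagrangians meeting a third transversely. By an argument analogous to the proof of Lemma~\ref{lem:continuation_chain_iso}, this local complex is acyclic with trivial Whitehead torsion (the map is upper triangular in the local action filtration with $\pm 1$ along the diagonal). Lemma~\ref{lem:Whitehead:1} then shows that the full mapping cone of $\lambda(\alpha \otimes -)$ has trivial Whitehead torsion, and the same argument treats $\rho(-\otimes\alpha)$; together, these yield vanishing of the Floer-theoretic torsion of the pair $(K_0, K_1)$.

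The main obstacle will be the local analysis: identifying each diagonal block $C_x$ with a standard acyclic Morse-theoretic cancellation complex of trivial torsion. This demands care in tracking the orientation lines $|\ro|^\vee$ under the clustering, in trivializing the local system $R$ over $U_x$, and in confirming (via a direct local computation near a transverse intersection point) that the identity-like diagonal entries have unit coefficients, so that the method of Lemma~\ref{lem:continuation_chain_iso} applies verbatim inside each $U_x$.
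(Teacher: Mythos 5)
Your proposal follows essentially the same route as the paper: reduce to a short isotopy across a single degenerate time $t$, filter by the action thresholds $a(p)$ between the clusters $B^{\pm}_x$, use Corollary~\ref{cor:no_triangles_for_action} to decompose the associated graded into local blocks indexed by $x \in K_t\cap L$, trivialize the coefficient system $R$ over the contractible sets $U_x$, and conclude via Lemma~\ref{lem:Whitehead:1}. Up to that point the argument is sound.

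The gap is in your treatment of the diagonal blocks. You claim that, after trivializing $R$, the local continuation map $C_x$ is ``upper triangular in the local action filtration with $\pm 1$ along the diagonal,'' by analogy with Lemma~\ref{lem:continuation_chain_iso}. That analogy fails: Lemma~\ref{lem:continuation_chain_iso} concerns a continuation for a change of almost complex structure with fixed Lagrangians, where the diagonal entries count \emph{constant} discs. Here $K_{t-\epsilon}$ and $K_{t+\epsilon}$ are genuinely different Lagrangians, and at time $t$ the pair $(K_t,L)$ is non-transverse at $x$, precisely so that the clusters $B^-_x$ and $B^+_x$ need not be in bijection (e.g.\ a birth--death singularity gives $\absv{B^-_x}-\absv{B^+_x}=\pm 2$). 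So there is no candidate diagonal and no filtration making the block map ``identity-like.'' The correct (and simpler) argument, which the paper uses, is that after trivializing $R$ over $U_x$ the block map is the extension of scalars of a chain homotopy equivalence of finitely generated free $\Z$-complexes, and its torsion is therefore the image of a class in $\Wh(\{1\})=0$ under $\Wh(\{1\})\to\Wh(G)$, hence vanishes; no diagonal or unit-coefficient computation is needed. Your last paragraph about ``confirming that the identity-like diagonal entries have unit coefficients'' is pointing at exactly the step that cannot be carried out as stated and should be replaced by the $\Wh(\{1\})=0$ observation.
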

\begin{proof}
Choosing a Hamiltonian isotopy as above, it suffices to prove that the continuation map
  \begin{align} \label{eq:7}
    CF_*(K_{t-\epsilon},L;R) \to CF_*(K_{t+\epsilon},L;R)
  \end{align}
has trivial Whitehead torsion.

Consider the action filtration given by the critical values of the action functional. If $c_0 < \cdots < c_d$ are the values of the action functional on $K_t \cap L$, we put $a(p) = -\tfrac{c_p-c_{p-1}}{2}$ (with $a(0)>-c_0$ and $a(d+1)<-c_d$) and consider the corresponding action filtration. We write
\begin{equation}
  G^pCF_*(K_{t \pm \epsilon},L;R) \equiv  F^{a(p)}CF_*(K_{t \pm \epsilon},L;R)/F^{a(p+1)}CF_*(K_{t \pm \epsilon},L;R)
\end{equation}
for the associated graded group. By Corollary \ref{cor:no_triangles_for_action}, the chain homotopy equivalence induced on the quotients
  \begin{align*}
    G^pCF_*(K_{t-\epsilon},L;R)  \to G^pCF_*(K_{t+\epsilon},L;R)
  \end{align*}
  splits as a direct sum - one for each of the original intersection points in $K_t\cap L$. Moreover, there is a fixed contractible set which contains the image (up to homotopy relative the essential marked points) of each holomorphic triangle which contributes to the map of associated graded spaces.
This means that the continuation map on these quotients can be identified with the standard continuation map
  \begin{align*}
    G^pCF_*(K_{t-\epsilon},L;\Z) \to G^pCF_*(K_{t+\epsilon},L;\Z)
  \end{align*}
  tensored with $R$ (and multiplication with the identity) - hence it has trivial Whitehead torsion. So, Lemma~\ref{lem:Whitehead:1} shows that the entire continuation map in Equation~\eqref{eq:7} has trivial torsion.
\end{proof}

The above result has the following straightforward generalisation:
\begin{corollary}
  \label{cor:FloerWhitehead:1}
The Whitehead torsions of Equation \eqref{eq:1} and \eqref{eq:8} are invariant under Hamiltonian isotopy of $K$, $L$ and $Q$.
\end{corollary}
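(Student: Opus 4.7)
The plan is to reduce the corollary to the case of varying one of $K$, $L$, or $Q$ at a time via a Hamiltonian isotopy, and in each case exhibit a square which commutes up to chain homotopy, with horizontal arrows the maps $\lambda(\alpha\otimes-)$ before and after the isotopy and vertical arrows given by Floer-theoretic continuations. By Proposition~\ref{lem:FloerWhitehead:1}, these vertical continuations already have trivial Whitehead torsion. Combined with additivity of torsion under composition (Equation~\eqref{eq:5}) and homotopy invariance of torsion (Lemma~\ref{lem:Whitehead:2}), the two horizontal torsions must then coincide.

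Concretely, for an isotopy of $K$ from $K_0$ to $K_1$, let $\gamma\in CF^*(K_1,K_0;\Z)$ be a cocyclic representative of the Floer-theoretic equivalence provided by the isotopy, and let $\alpha_i\in CF^*(K_i,Q;\Z)$ be the chosen equivalences with $Q$. By Remark~\ref{rem:equivalence_unique}, $[\alpha_1]=\pm[\alpha_0]\cdot[\gamma]$ in cohomology, so we may modify $\alpha_1$ by a coboundary to arrange this equality at the chain level; by Lemma~\ref{cor:FloerWhitehead:1} and Lemma~\ref{lem:Whitehead:2} such a change does not affect the Whitehead torsion. The bimodule version of~\eqref{eq:Floer_product_associative}, applied to the quadruple $(K_1,K_0,Q,L)$, then produces a chain homotopy
\begin{equation*}
\lambda(\alpha_1\otimes-)\simeq \lambda(\alpha_0\otimes-)\circ\lambda(\gamma\otimes-),
\end{equation*}
so that $\tau(\lambda(\alpha_1\otimes-))=\tau(\lambda(\alpha_0\otimes-))+\tau(\lambda(\gamma\otimes-))=\tau(\lambda(\alpha_0\otimes-))$. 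Isotoping $Q$ is symmetric, with a continuation $\lambda(\delta\otimes-)$ for $\delta\in CF^*(Q_0,Q_1;\Z)$ entering the composition from the other side. An isotopy of $L$ from $L_0$ to $L_1$ uses instead the left-right compatibility diagram~\eqref{eq:Floer_homology_bimodule}: the vertical arrows become right multiplications $\rho(-\otimes\eta)$ by an equivalence $\eta\in CF^*(L_0,L_1;\Z)$, which once again have trivial torsion by Proposition~\ref{lem:FloerWhitehead:1}.

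The argument for $\lambda(\beta\otimes-)$ is completely parallel after interchanging the roles of $K$ and $Q$. The main difficulty is not conceptual but a matter of bookkeeping: one must track which flavor of the $A_\infty$-bimodule associativity (left-left for moves in $K$ or $Q$; left-right for moves in $L$) supplies the appropriate homotopy-commutative square in each case, and verify that the freedom to alter each cocyclic equivalence by a coboundary is absorbed by Lemma~\ref{lem:Whitehead:2}. Once these identifications are in place, the corollary follows by concatenating isotopies of $K$, $L$, and $Q$ one at a time.
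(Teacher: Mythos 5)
Your proposal is correct and follows essentially the same route as the paper: isotope $K$, $L$, $Q$ one at a time, exhibit a homotopy-commutative triangle (for $K$ or $Q$) or square (for $L$) whose extra arrow is a continuation map of trivial torsion by Proposition~\ref{lem:FloerWhitehead:1}, and conclude via the additivity formula~\eqref{eq:5} and the homotopy invariance of torsion from Lemma~\ref{lem:Whitehead:2}. The only difference is presentational: you spell out explicitly, via Remark~\ref{rem:equivalence_unique}, why the relevant equivalence cocycles can be matched up modulo coboundary, whereas the paper compresses this into the observation that the diagrams commute up to homotopy independently of the choice of continuation class; the underlying homotopy-commutativity inputs ($A_\infty$-module compatibility as in Equations~\eqref{eq:Floer_homology_right_module} and~\eqref{eq:Floer_homology_bimodule}) are identical.
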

\begin{proof}
  Let $L_0$ and $L_1$ be Hamiltonian isotopic. As a special case of Equation \eqref{eq:Floer_homology_bimodule}, we have a chain homotopy commutative diagram 
  \begin{align*}
    \xymatrix{
      CF_*(K,L_0;R) \ar[r]^{\lambda(\alpha\otimes-)} \ar[d]^{\rho(-\otimes \beta)} & CF_*(Q,L_0;R) \ar[d]^{\rho(-\otimes \beta)} \\
      CF_*(K,L_1;R) \ar[r]^{\lambda(\alpha\otimes-)} & CF_*(Q,L_1;R) \\      
    }
  \end{align*}
  where the vertical maps are the continuation maps considered in Lemma~\ref{lem:FloerWhitehead:1} which up to homotopy do not depend on the choice of $\beta \in CF^0(L_0,L_1;\Z)$. Using Lemma~\ref{lem:FloerWhitehead:1}, Lemma~\ref{lem:Whitehead:2} and Equation~\eqref{eq:5} we see that the two horizontal maps have the same Whitehead torsion.

  Changing $K$ or $Q$ involves slightly different diagrams:
  \begin{align*}
    \xymatrix{
      CF_*(K,L;R) \ar[r] \ar[rd] & CF_*(Q_0,L;R) \ar[d]\\
      & CF_*(Q_1,L;R) \\
    } \qquad 
    \xymatrix{
      CF_*(K_0,L;R) \ar[r] \ar[d] & CF_*(Q,L;R)\\
      CF_*(K_1,L;R) \ar[ur] \\
    } 
  \end{align*}
  but the conclusion is the same since in each case the additional map has trivial torsion by Lemma~\ref{lem:FloerWhitehead:1}, and the diagrams homotopy commute by using Equation \eqref{eq:Floer_homology_right_module} and its analogue for the left module action.
\end{proof}

\section{The Floer-theoretic torsion vanishes} \label{sec:spectr-sequ}

In this section we specialise to the setting of cotangent bundles: recall that, if $N$ is a closed manifold, every closed exact Lagrangian in $T^*N$ is Floer-theoretically equivalent to the $0$-section (\cite{Nadler,FSS,Abou1,MySympfib}). Our main result will be the following:
\begin{proposition}
If $L \subset T^*N$ is an exact Lagrangian, then the torsion of the Floer-theoretic equivalence with the $0$-section vanishes. 
\end{proposition}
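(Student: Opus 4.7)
The plan follows the strategy sketched in the introduction: combine the Hamiltonian invariance proved in Corollary~\ref{cor:FloerWhitehead:1} with a large-scale Hamiltonian deformation so that an action filtration on the Floer complexes is compatible with the multiplication maps defining the torsion, and the associated graded quotients admit a transparent, local description. Since replacing $L$ by a Hamiltonian isotopic Lagrangian $L_\tau$ does not change the torsion, it will suffice to establish vanishing after applying such a deformation.

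Concretely, I would choose a Hamiltonian $H$ on $T^*N$ adapted to the geometry (as in \cite{HomotopySerre}, for instance a fiberwise quadratic function coupled with a Morse function on $N$) and set $L_\tau = \phi_H^\tau(L)$ for $\tau$ large. The aim is to arrange that the intersection points $L_\tau \cap N$ (after a further small Morse perturbation of $N$) organise into clusters, with the action values of distinct clusters separated by windows of size growing with $\tau$, while the actions inside any single cluster vary by a bounded amount. This produces compatible action filtrations on $CF_*(L_\tau, N; R)$, $CF_*(N, L_\tau; R)$, $CF_*(L_\tau, L_\tau; R)$, and $CF_*(N, N; R)$ whose associated graded pieces correspond to individual clusters. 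A monotonicity argument in the spirit of Corollary~\ref{cor:no_triangles_for_action} then shows that, on each associated graded piece, the holomorphic strips and triangles defining $\lambda(\alpha \otimes -)$, $\rho(- \otimes \beta)$, and their composition are trapped in a contractible neighbourhood of the cluster. Consequently the induced map on the associated graded splits as a standard local continuation-type map tensored with the identity on $R$. As in the proofs of Lemma~\ref{lem:continuation_chain_iso} and Lemma~\ref{lem:FloerWhitehead:1}, such maps are upper triangular with identity diagonal entries with respect to the residual action ordering inside the cluster, so they have trivial Whitehead torsion.

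Applying Lemma~\ref{lem:Whitehead:1} to the acyclic mapping cone of the composition $CF_*(N,N;R) \to CF_*(L_\tau, L_\tau; R)$ defining the torsion then yields the desired vanishing. The main technical obstacle lies in engineering the Hamiltonian deformation to do exactly this: one needs quantitative control over the action spectrum of $L_\tau \cap N$ and over the symplectic area of any holomorphic triangle contributing to the associated graded maps, in order to guarantee that any off-diagonal contribution linking two distinct clusters must carry more energy than the cluster spacing. This step relies on a careful choice of almost complex structure compatible with the deformation, together with the tame geometry at infinity of $T^*N$, and is where the cotangent bundle hypothesis is genuinely exploited rather than the abstract formalism of Section~\ref{sec:whit-tors-fukaya}.
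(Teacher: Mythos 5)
Your overall strategy --- deform by a large-scale symplectomorphism, filter by action, localize triangles by monotonicity, and conclude with the filtration lemma --- is the one the paper uses, so the architecture is right. But there are two concrete gaps, one of which is fatal as stated.

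First, the deformation you propose does not produce the clustering you need. The paper does not apply a time-$\tau$ flow of a fiberwise quadratic plus Morse Hamiltonian for large $\tau$; that would sweep $L$ around the fibers and typically \emph{increase} the number of intersection points with $N$ without any useful localisation. The paper instead uses the Liouville flow $\psi^r$ for $r\ll 1$, which is a \emph{shrinking} of $L$ toward the zero section (exactness of $L$ makes this $C^0$-convergence to $N$), composed with a single small Hamiltonian perturbation $\phi_g$ by $g\circ q$. Crucially, $N$ itself is invariant under the Liouville flow, so both $L^r = \psi^r L$ and $N$ can be perturbed by the \emph{same} $\phi_g$, and the action windows on the two Floer complexes line up exactly --- this is the structural fact that makes the filtration compatible on both sides of the map $CF_*(L^r,L^r_1;R)\to CF_*(N,N_1;R)$. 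Your deformation has no analogue of this matching.

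Second, and more seriously, your argument that the associated graded maps have trivial torsion is wrong. You claim the maps are ``upper triangular with identity diagonal entries.'' But the associated graded pieces on the two sides are not even of the same rank: $G^p CF_*(N,N_1;R)\cong R_{x_p}$ has one generator, while $G^p CF_*(L^r,L^r_1;R)$ has $|B^r_p|$ generators. The map is a chain homotopy equivalence, not an isomorphism of modules, so ``upper triangular with $1$'s on the diagonal'' simply does not apply. (That argument is correct for small-perturbation continuation maps, as in Lemma~\ref{lem:continuation_chain_iso} and Proposition~\ref{lem:FloerWhitehead:1}, and you appear to be transplanting it here.) The paper's actual reason is different and essential: by the monotonicity corollary, every rigid disc contributing to the associated graded map stays inside a contractible neighbourhood $U_{x_p}$, so the induced map on local systems is trivial and the entire associated graded map is obtained by \emph{extension of scalars from $\Z$ to $R$} of an integral chain homotopy equivalence. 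Since $\Wh(\{1\})=0$, any such extension automatically has trivial Whitehead torsion. This is the step where the local coefficient ring is reduced to $\Z$ and where the vanishing really comes from; without it your proof does not close.
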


We shall prove this result by appealing to the Hamiltonian invariance of torsion and a large-scale deformation of the Lagrangian $L$. The deformation of $L$ is essentially the same as that used in \cite{HomotopySerre}. 

Denote by $\psi^r$ the time $\log r$ Liouville flow (which corresponds to dilating the fibres by $r$), and by  $\phi_g$ the Hamiltonian flow of $g \circ q$, where  $g  \co N \to \R$ is a Morse function with distinct critical values. We define
\begin{align} \label{eq:2}\begin{array}{l@{}l}
    L^r &= \psi^r L \\
    L^r_1 &= \phi_{g} L^r,
\end{array}
\end{align}
and note that Corollary \ref{cor:FloerWhitehead:1} implies that the Floer theoretic torsion of the equivalence of $L$ with the zero section agrees with the torsion of the map
\begin{equation} \label{eq:deform_L_N_by_large_and_small}
 CF_*(L^r,L^r_1; R) \to   CF_*(N,N_1; R),
\end{equation}
induced by left and right multiplication with Floer theoretic equivalences $\alpha^r \in  CF^*(L^r,N)$ and $\beta^r_1 \in  CF^*(L^r_1,N_1)$, noting the fact that $N$ is invariant under the Liouville flow, hence $N^r = N$.

By construction, intersection points of $N$ and $N_1$ occur at critical points of $g$, and $\cA(x) = - g(x)$. In particular, if $-c_d < \cdots < -c_0$ are the critical values of $g$ then we consider the action filtration
\begin{equation}
  F^{a(p)}  CF_*(N,N_1; R)
\end{equation}
using the actions $a(p)=-\frac{c_p - c_{p-1}}{2} $, the fact that there is a unique intersection $x_p$ of action $c_p $ implies that the associated graded group is
\begin{equation}
    G^p  CF_*(N,N_1; R) \cong R_{x_p}.
\end{equation}

We shall now assume that $r \ll 1$ in which case we have:
\begin{itemize}
\item The cocycles $\alpha^r$  and $\beta^r_1$ are supported in action level $\ll 1$.
\item The intersection points of $L^r $ and  $L^r_1 $ decompose as a disjoint union
  \begin{equation}
 L^r \cap L^r_1 =  \coprod_{p=0}^{d} B_{p}^{r}, 
  \end{equation}
with $B_p^{r}$ consisting of those intersection points lying in a small neighbourhood of the critical point $x_p$ of $g$.
\item The action of any element $y \in  B_{p}^{r}$ satisfies
  \begin{equation} \label{eq:action_almost_1cp}
  |\cA(y) - c_p| \ll 1.   
  \end{equation}
\end{itemize}

We now consider the filtration on $  CF_*(L^r,L^r_1; R) $ given by the same action levels as the above filtration on $ CF_*(N,N_1; R)  $, i.e. $\frac{c_p - c_{p-1}}{2} $. The $p$\th associated graded group is generated by intersection points of action between $ \frac{c_p - c_{p-1}}{2}  $ and $\frac{c_{p+1} - c_{p}}{2} $. By Equation \eqref{eq:action_almost_1cp}, all such intersection points in fact have action approximately equal to $c_p$ and lie in the set $ B_{p}^{r}$ of intersection points which are close to $x_p$. This associated graded group is therefore given by
\begin{equation}
    G^p  CF_*(L^r,L^r_1; R) \cong  \bigoplus_{y \in  B_p^{r}} R_{y},
\end{equation}
and is equipped with the induced differential.

Since left and right multiplication induce maps which preserve action filtrations, and the equivalences  $\alpha^r$  and $\beta^r_1$ have small action, we conclude that Equation \eqref{eq:deform_L_N_by_large_and_small} descends to the associated graded with respect to the filtration, as does the chain homotopy inverse induced by multiplication with the quasi-inverses of $\alpha^r$  and $\beta^r_1$. We conclude:
\begin{lemma}
The equivalences  $\alpha^r$  and $\beta^r_1$ induce a chain homotopy equivalence
 \begin{equation} \label{eq:map_associated_graded_large_defor}
 G^p  CF_*(L^r,L^r_1; R)  \to   G^p  CF_*(N,N_1; R).
\end{equation} 
\qed
\end{lemma}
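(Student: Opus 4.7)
The plan is to show that the map in \eqref{eq:deform_L_N_by_large_and_small} is part of a chain homotopy equivalence whose chain maps and chain homotopies all strictly preserve the action filtration $F^{a(p)}$; the induced maps on $G^p$ will then automatically form a chain homotopy equivalence. Three ingredients need to be verified: that the map itself preserves the filtration, that there is a filtration-preserving quasi-inverse, and that the homotopies realizing the equivalence are filtration-preserving.

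Filtration preservation of the map follows from the standard Stokes estimate $\cA(\mu_2(y,x)) = \cA(y) + \cA(x) - E$ with $E = \int u^*\omega \geq 0$ (see \cite[Section 8g]{MR2441780}). Since both equivalences $\alpha^r$ and $\beta^r_1$ are supported at action $\ll 1$ whereas the filtration gaps are bounded below by $\min_p (c_p - c_{p-1})/2$, a constant independent of $r$, composing with $\lambda(\alpha^r \otimes -)$ and $\rho(- \otimes \beta^r_1)$ sends each $F^{a(p)}$ into itself. The analogous estimate for the higher products $\mu_k$ (in particular $k=3$) shows that the homotopies built from $\mu_3$ as in \eqref{eq:Floer_product_associative} and \eqref{eq:Floer_homology_bimodule} likewise preserve the filtration, because the small-action equivalences inserted contribute a negligible action shift.

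The quasi-inverse is constructed by multiplying with Floer-theoretic equivalences in the opposite direction, namely $\alpha' \in CF^*(N, L^r)$ and $\beta' \in CF^*(N_1, L^r_1)$, which exist by the symmetry of Floer-theoretic equivalence recorded in Remark \ref{rem:equivalence_unique}. These are again supported at action $\ll 1$ because intersection points of $N$ with $L^r$ (and of $N_1$ with $L^r_1$) lie in small neighbourhoods of critical points of $g$ and hence carry small action for $r \ll 1$. The defining identities $[\alpha' \cdot \alpha^r] = [e_{L^r}]$ and its analogues, together with the $\mu_3$ homotopies just discussed, yield the desired chain homotopy equivalence at the full filtered level; descending to $G^p$ completes the argument. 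The main quantitative obstacle, and the only nontrivial point, is choosing $r$ small enough that every cocycle and every $\mu_3$-homotopy term appearing in the argument has action strictly below the smallest filtration gap; this holds uniformly because $g$ has only finitely many critical values.
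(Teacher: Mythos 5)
Your argument is essentially the paper's, filled in with more detail: both rest on observing that the bimodule multiplications $\lambda$ and $\rho$ strictly preserve the action filtration, that $\alpha^r$ and $\beta^r_1$ (and their quasi-inverses) are supported at action much smaller than the gaps $\tfrac{c_p - c_{p-1}}{2}$, and that the $\mu_3$-type homotopies witnessing the equivalence are likewise filtered, so that the whole filtered chain homotopy equivalence descends to each $G^p$. You are slightly more explicit than the paper about the Stokes estimate and about verifying that the higher homotopies are filtration-preserving (the paper leaves the latter implicit), and you correctly note that the needed uniformity in $p$ comes from $g$ having finitely many critical values; these are fair elaborations, not a different route.
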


On the other hand, Equation \eqref{eq:map_associated_graded_large_defor} agrees with the map obtained from
 \begin{equation} \label{eq:10} 
 G^p  CF_*(L^r,L^r_1; \Z)  \to   G^p  CF_*(N,N_1; \Z)
\end{equation}
by extension of scalars from $\Z$ to $R$. Indeed, using Corollary \ref{cor:Monotonicity:1} with $K=N$ and $Q=N_1$, we see (as in the proof of Proposition \ref{lem:FloerWhitehead:1}) that the homotopy class of the path defined by any rigid disc counted in these maps is homotopic relative to end points to a path inside a small contractible neighbourhood of $x_p$.

In particular, the Whitehead torsion of Equation \eqref{eq:map_associated_graded_large_defor} vanishes. Since this is the map on associated graded groups of Equation \eqref{eq:deform_L_N_by_large_and_small}, we conclude from Lemma \ref{lem:Whitehead:1} that the Whitehead torsion vanishes. Using invariance of torsion under Hamiltonian isotopies, we arrive at the main result of this section:
\begin{proposition} \label{prop:floer_torsion_vanishes}
The Whitehead torsion of any Floer-theoretic equivalence of closed exact Lagrangians in $T^*N$ vanishes. \qed
\end{proposition}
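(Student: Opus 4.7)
The strategy is to reduce the problem to an associated-graded computation under a suitable action filtration, where the differential becomes simple enough that triviality of the torsion can be read off directly. By Corollary~\ref{cor:FloerWhitehead:1}, the Floer-theoretic torsion is Hamiltonian invariant in all three Lagrangians involved in the bimodule multiplication, so I am free to replace $(L,N)$ by $(L^r, L^r_1)$ paired against $(N, N_1)$, where $L^r = \psi^r L$ is the Liouville rescaling with $r \ll 1$, and $L^r_1 = \phi_g L^r$, $N_1 = \phi_g N$ are simultaneous perturbations by the Hamiltonian flow of $g \circ q$ for a Morse function $g$ on $N$ with distinct critical values. It then suffices to show that the map
\[
CF_*(L^r, L^r_1; R) \to CF_*(N, N_1; R)
\]
induced by left and right multiplication with the equivalences $\alpha^r$ and $\beta^r_1$ has trivial Whitehead torsion.

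The second step is to exploit an action filtration with cuts at $a(p) = -\tfrac{c_p - c_{p-1}}{2}$, where $c_0 < \cdots < c_d$ are the critical values of $g$. On the $N$-side this filtration isolates the unique intersection point $x_p$ of action $c_p$, giving $G^p CF_*(N, N_1; R) \cong R_{x_p}$. On the $L^r$-side, for $r$ sufficiently small the intersections $L^r \cap L^r_1$ cluster in disjoint sets $B^r_p$ near each $x_p$ with actions satisfying $|\cA(y) - c_p| \ll 1$, so that the same filtration isolates exactly the generators lying in $B^r_p$. Because $\alpha^r$ and $\beta^r_1$ carry action of order $r$, multiplication by them preserves this filtration and therefore descends to a chain homotopy equivalence on each graded piece.

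The crux is then to identify the induced map on associated graded groups with the $\Z$-linear map $G^p CF_*(L^r, L^r_1; \Z) \to G^p CF_*(N, N_1; \Z)$ extended from $\Z$ to $R$. A monotonicity argument in the spirit of Corollary~\ref{cor:Monotonicity:1} provides this: the energy of any rigid holomorphic triangle contributing to the graded map is bounded by the width of the filtration step (together with the small action of the equivalences), and monotonicity then forces its image into a fixed small contractible neighbourhood of $x_p$. Consequently the attached element of $\pi_1(T^*N)$ is trivial, and the graded matrix coefficient lives in the image of $\Z \hookrightarrow R$. Once this identification is made, triviality of the torsion of each graded map follows because its $\Z$-version is essentially a local Morse-style continuation map whose torsion is forced to vanish, and extension of scalars preserves triviality of torsion.

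The last step is to promote vanishing on associated graded pieces to vanishing on the full complex, which is exactly the content of Lemma~\ref{lem:Whitehead:1}. The step I expect to be the main obstacle is the monotonicity-based identification of the graded map with an extension of scalars, since this is where one must genuinely use the small-scale geometry of holomorphic triangles, as opposed to the formal properties of Floer theory and Whitehead torsion that govern the rest of the argument.
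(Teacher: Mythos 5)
Your proposal follows the paper's proof essentially step-for-step: the same replacement by $(L^r, L^r_1)$ against $(N, N_1)$ via Liouville rescaling and the fibrewise Hamiltonian $g\circ q$, the same action filtration at levels $a(p) = -\tfrac{c_p-c_{p-1}}{2}$, the same clustering argument $L^r\cap L^r_1 = \coprod_p B^r_p$, the same appeal to Corollary~\ref{cor:Monotonicity:1} to identify the graded map as an extension of scalars from $\Z$ to $R$, and the same invocation of Lemma~\ref{lem:Whitehead:1} to conclude. The only minor difference is cosmetic: where you say the $\Z$-version's torsion ``is forced to vanish'' as a local Morse-style continuation, the underlying reason is simply that $\Wh(1)=0$, so any based chain homotopy equivalence over $\Z$ has trivial torsion and remains trivial after extension of scalars.
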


\section{From cellular to Floer cochains} \label{sec:from-cellular-floer}

In this section, we prove that the Whitehead torsion of the projection map $ L \to N $, which we call the \emph{classical torsion},  agrees with the Floer theoretic torsion.
We will prove this by passing through a sequence of intermediary maps which are defined using Morse and Floer theory in disc cotangent bundle $D^*N \subset T^*N$ (which we assume contains $L$ in its interior).


On the Morse theoretic side,  we consider the following specific context:  let $h:L \to \R$ be any Morse function on $L$, which we extend to a small tubular neighborhood of $L$ inside $D^*N$ by 
\begin{equation} \label{eq:function_near_L}
  H(z)=h(\pi(z))+d_L(z)^2 
\end{equation}
where $d_L(-)$ denotes the distance to $L$ with respect to some Riemannian metric on $D^*N$, and $\pi(z)$ is the nearest point to $z$ in $L$. Each critical point of $h$ gives rise to a critical point of $H$ whose descending manifolds agree (i.e. the descending manifolds with respect to $H$ are contained in $L$). Extend this to a Morse function $H:D^*N \to \R$ which has outwards pointing gradient at the boundary, we conclude that the Morse complex $CM_*(L,H;R)$ embeds as a sub-complex into $CM_*(D^*N,H;R)$.



\begin{lemma}\label{lem:CWtoFloer:1}
The classical torsion agrees with the torsion of the inclusion
\begin{equation} \label{eq:map_Morse_complex}
CM_{*}(L;h,R) \to CM_*(D^* N;H,R).
\end{equation}
\end{lemma}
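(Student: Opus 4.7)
The plan is to identify, via the standard Morse-to-cellular correspondence, the Morse inclusion of Equation~\eqref{eq:map_Morse_complex} with the chain map on cellular chains of universal covers induced by $\iota_L \co L \hookrightarrow D^*N$, and then to deduce the equality with the classical torsion $\tau(\pi)$ by writing $\pi = q \circ \iota_L$, where $q \co D^*N \to N$ is the bundle projection, and observing that $q$ is a simple homotopy equivalence.  The first step is to invoke the natural isomorphism between a Morse chain complex with local coefficients and the cellular chain complex of the associated CW decomposition with the same coefficients: for a Morse--Smale function on a compact manifold with outward-pointing gradient along any boundary, the descending manifolds of its critical points yield a CW decomposition, and the Morse complex with coefficients in a local system $\mathcal L$ is canonically isomorphic to the corresponding cellular chain complex.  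Applied to $(L, h)$ and $(D^*N, H)$---using the property guaranteed by Equation~\eqref{eq:function_near_L} that the descending manifolds of $H$ near $L$ coincide with those of $h$---this yields a CW structure on $D^*N$ for which $L$ is a subcomplex, so that Equation~\eqref{eq:map_Morse_complex} becomes the associated cellular chain inclusion.

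Next, I would reinterpret these cellular chain complexes with $R$-coefficients as cellular chain complexes of universal covers.  Both $CM_*(D^*N; H, R)$ and $CM_*(L; h, R)$ correspond, as $\Z[\pi_1(D^*N, \star)]$-modules, to $C^{\cw}_*(\widetilde{D^*N}; \Z)$ and $C^{\cw}_*(\widetilde L; \Z)$ respectively.  Here I use that $\iota_L$ is a homotopy equivalence---a consequence of the results of \cite{Nadler,FSS,Abou1,MySympfib}---so that $\iota_L$ induces an isomorphism on $\pi_1$ and the pullback of $\widetilde{D^*N}$ along $\iota_L$ is precisely $\widetilde L$.  Under these identifications, the Morse inclusion becomes the $\Z[\pi_1(D^*N)]$-equivariant chain map on universal covers induced by $\iota_L$, whose Whitehead torsion is by definition $\tau(\iota_L) \in \Wh(\pi_1(D^*N))$.

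To conclude, I would apply the composition formula from Equation~\eqref{eq:5} to $\pi = q \circ \iota_L$, giving $\tau(\pi) = \tau(q) + \tau(\iota_L)$.  The bundle projection $q \co D^*N \to N$ is a simple homotopy equivalence, being the projection of a disc bundle onto its zero section; explicitly, a CW structure on $N$ may be lifted to a CW structure on $D^*N$ in which each cell of $N$ has a single companion cell of relative top dimension (a fibre disc cell), and $q$ is realised as a sequence of elementary collapses of these companion pairs.  Hence $\tau(q) = 0$, so $\tau(\pi) = \tau(\iota_L)$, which equals the torsion of the Morse inclusion by the identification above.  The main obstacle is the careful bookkeeping needed to ensure that the $R$-module structure on $CM_*(L; h, R)$, defined via concatenation with paths in the ambient $D^*N$ from the basepoint $\star$ to each generator, matches the natural $\Z[\pi_1(D^*N)]$-action on $C^{\cw}_*(\widetilde L; \Z)$ after identifying $\pi_1(L) \xrightarrow{\sim} \pi_1(D^*N)$ via $\iota_L$; any resulting discrepancies in base-points, lifts of cells, and orientations are absorbed by simple base changes within the Whitehead group.
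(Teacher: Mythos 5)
Your overall strategy is the same as the paper's: factor the classical map through $D^*N$, argue that the projection $D^*N \to N$ has trivial Whitehead torsion, and identify the Morse chain inclusion with a cellular chain inclusion computing $\tau(\iota_L)$. The argument that $D^*N\to N$ has vanishing torsion (simple collapses of fibre-disc cells) matches the paper's reasoning.

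There is, however, a gap in the first step. You assert that the descending manifolds of $H$ (which has outward-pointing gradient along $\partial D^*N$) ``yield a CW structure on $D^*N$.'' They do not: the union of descending manifolds is a proper subset $X_{D^*N} \subsetneq D^*N$ (a spine onto which $D^*N$ deformation retracts), so the Morse complex $CM_*(D^*N;H,R)$ is the cellular chain complex of $X_{D^*N}$, not of $D^*N$. Since a deformation retraction of CW pairs does not automatically have trivial torsion --- every element of $\Wh(G)$ is realized by some such pair --- you cannot immediately conclude that the torsion computed from the Morse complexes equals $\tau(\iota_L)$. One must separately establish that $\tau(X_{D^*N}\hookrightarrow D^*N)=0$. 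The paper does exactly this: it deforms $H$ near $\partial D^*N$ to a function of the form $f(\pi(z)) - d_{\partial D^*N}(z)$ with $f$ a Morse function on $\partial D^*N$, which produces an honest CW decomposition of all of $D^*N$ in which each critical point of $f$ contributes two extra cells that simple-collapse, starting from the top dimension, onto $X_{D^*N}$. Supplying this argument (or an equivalent handle-theoretic one) closes the gap; the rest of your proof then goes through.
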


\begin{proof}
We first use the factorisation
  \begin{equation}
   L \to D^* N \to  N,
  \end{equation}
to see that the classical torsion agrees with the torsion of the map $L \to D^*N$ we note that the second map has trivial torsion since $D^*N $ has a sequence of simple collapses to $N$.

We now appeal to the fact that union of the descending manifolds of $H$ yield a CW subcomplex $X_{D^*N}  \to D^* N$, so that we obtain a further factorisation
\begin{equation}
 L \to X_{D^* N} \to D^* N.
\end{equation}
We claim that the inclusion $X_{D^* N} \to D^* N$ has trivial torsion. Indeed, this is in fact a sequence of trivial expansions, as can be seen by deforming the Morse function $H$ outside a compact set so that its restriction to a neighbourhood of $\partial D^*N$ agrees with
\begin{equation}
  f(\pi(z)) - d_{ \partial D^*N}(z),
\end{equation}
with $f$ a Morse function on $ \partial D^*N$. The descending manifolds give rise to a cell-decomposition of $D^*N$ as $X_{D^*N}$ with two extra cells attached for each critical point of $f$. This collapses to $X_{D^* N}$ by starting with the top dimensional cells and going down in dimension. Combining the above arguments, we conclude that the torsion of $L \to N$ agrees with the torsion of $L \to X_{D^* N}$.

To complete the argument, we observe that the map $ L \to X_{D^* N} $ is cellular with respect to the Morse-theoretic CW structure on $L$, and we have a natural commutative diagram
\begin{equation}
     \xymatrix{
     C_*^{CW}(L;R) \ar[r] \ar[d]^{\cong} & C_*^{CW}(X_{D^*N};R) \ar[d]^{\cong} \\
    CM_*(L,h;R) \ar[r] & CM_*(D^*N,H;R).
    }
\end{equation}

\end{proof}

We have arranged our fixed choice of pair of Morse functions so that Equation \eqref{eq:map_Morse_complex} is an inclusion. This inclusion can also be interpreted as a count of configurations shown in Figure~\ref{fig:maps:2}, which consist of: (1) a flow line $\gamma_-$ of $h$ parametrised by $(-\infty,0]$ (2) a flow line $\gamma_+$ of $H$ parametrised by $[0,\infty)$, and (3) holomorphic map $u \co D^2 \to M$ mapping the boundary to $L$, such that $u(-1) = \gamma_-(0)$ and $u(0) = \gamma_+(0)$. The key point is that all such rigid configurations are constant: the exactness of $L$ implies that all holomorphic discs are constant, and Equation \eqref{eq:function_near_L} implies that all negative gradient flow lines of $H$ starting on $L$ are contained in $L$.
\begin{figure}[ht]
  \centering
  \begin{tikzpicture}
    \draw[fill=lightgray] (1.8,0) circle (0.8);
    \draw[->] (0,0) -- (0.5,0) node [above] {$h$} node [below] {$L$};
    \draw (0.5,0) -- (1,0);
    \draw (2.3,-0.4) node [below right] {$L$};
    \fill (0,0) circle (2pt);
    \draw[dotted,->,very thick] (1.8,0) --  (2.6,0.8) node [above] {$H$};
    \fill (2.6,0.8) circle (2pt);
    \draw[dashed] (-0.3,-1.4) -- (-0.3,1.4) -- (2.95,1.4) -- (2.95,-1.4) -- (-0.3,-1.4); 
  \end{tikzpicture}
  \caption{Standard Morse map in Lemma~\ref{lem:CWtoFloer:1} interpreted as a count of discs and gradient flow lines: the dotted line is a flow lines of  $H$ starting at the center of the disc and converging to a critical point.} \label{fig:maps:2}
\end{figure}
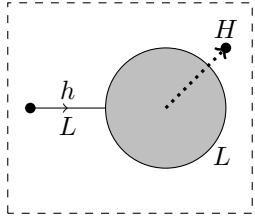

\begin{proposition} \label{prop:CWtoFloer:1}
    The classical and Floer-theoretic torsions agree.
\end{proposition}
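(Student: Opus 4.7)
The plan is to realize the Floer-theoretic torsion as the torsion of a map between Morse complexes on $L$ and $N$, then to match that map, up to chain homotopy and composition with trivial-torsion maps, with the inclusion $j \co CM_*(L;h,R) \to CM_*(D^*N;H,R)$ of Lemma~\ref{lem:CWtoFloer:1}. By Hamiltonian invariance (Corollary~\ref{cor:FloerWhitehead:1}), I may compute the Floer-theoretic torsion using the self-Floer complexes $CF_*(L,L;R) = CM_{-*}(L,h;R)$ and $CF_*(N,N;R) = CM_{-*}(N,g;R)$ adopted in Section~\ref{sec:floer-diff-prod}, for the same Morse function $g$ used to define $H$. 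The Floer-theoretic torsion is then the torsion of a chain map $\Psi \co CM_*(L;h,R) \to CM_*(N;g,R)$ given by left and right multiplication by a pair of Floer-theoretic equivalences $\alpha \in CF^0(L,N;\Z)$ and $\beta \in CF^0(N,L;\Z)$.

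I would next arrange the extension preceding Lemma~\ref{lem:CWtoFloer:1} so that, in addition to the form \eqref{eq:function_near_L} holding near $L$, the function $H$ on $D^*N$ also restricts to $g$ on a tubular neighborhood of $N$, with the descending manifold of each critical point of $g$ contained in $N$. This is achievable because $L$ lies in the interior of $D^*N$ and can be made disjoint from a chosen neighborhood of $N$ after a preliminary $C^0$-small isotopy (absorbing the deformation into the Hamiltonian invariance of $\Psi$). We then obtain two canonical sub-Morse-complex inclusions $j \co CM_*(L;h,R) \hookrightarrow CM_*(D^*N;H,R)$ and $\iota \co CM_*(N;g,R) \hookrightarrow CM_*(D^*N;H,R)$. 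The inclusion $\iota$ has trivial Whitehead torsion by a direct analogue of the collapsing argument in Lemma~\ref{lem:CWtoFloer:1}: the critical points of $H$ in $D^*N \setminus N$ can be paired into a sequence of simple cancellations, as $D^*N$ simply collapses onto $N$. Thus, by Equation~\eqref{eq:5} together with Lemma~\ref{lem:CWtoFloer:1}, it would suffice to produce a chain homotopy between $\iota \circ \Psi$ and $j$.

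Both $j$ and $\iota \circ \Psi$ are maps $CM_*(L;h,R) \to CM_*(D^*N;H,R)$ that can be interpreted as PSS-type counts of rigid configurations of the shape depicted in Figure~\ref{fig:maps:2}: a descending flow line of $h$ on $L$, followed by a holomorphic disc in $D^*N$, followed by an ascending flow line of $H$ terminating at a critical point of $H$. For $j$ the disc is constant at a point of $L$ (by the argument following Lemma~\ref{lem:CWtoFloer:1}) and the terminal critical point lies in $L$; for $\iota \circ \Psi$ the disc is obtained by gluing the PSS equivalence strips associated with $\alpha$ and $\beta$, and the terminal critical point lies in $N$. The desired chain homotopy would be built from a one-parameter family of moduli problems that interpolates between these two configurations, with the family parametrised by the ``size'' of the inserted $\alpha$--$\beta$ strip; at one end of the family the strip degenerates (giving $j$, using exactness of $L$ to kill all non-constant contributions), and at the other end it is glued honestly (giving $\iota \circ \Psi$). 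The signed count of $1$-dimensional components then gives the homotopy, by the usual PSS-style compactness and gluing analysis. Combining Lemma~\ref{lem:Whitehead:2} with Equation~\eqref{eq:5} and $\tau(\iota)=0$ yields $\tau(\Psi)=\tau(j)$, which is the classical torsion by Lemma~\ref{lem:CWtoFloer:1}. The main obstacle is the construction of this interpolating moduli space in a way that is compatible with the Morse perturbation data at the two ends and that realises both maps as genuine boundary specialisations of a single parametrised family; this is the standard Piunikhin--Salamon--Schwarz-type comparison argument in our setting, simplified by the exactness of $L$ and $N$.
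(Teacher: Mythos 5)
Your overall strategy is the same as the paper's: realize the Floer-theoretic torsion map as a composition of PSS-type operations, dispose of the Morse inclusion into $D^*N$ by Lemma~\ref{lem:CWtoFloer:1}, and build a one-parameter family of moduli problems homotoping the composition to $j$. (The paper writes the composition as $m \circ \lambda \circ \rho$ with $m$ a PSS-type map to $CM_*(D^*N;H,R)$ rather than your Morse inclusion $\iota$; this is a cosmetic difference, though it saves the paper from having to separately identify the Morse inclusion with a PSS-type count, an identification you would need in order to glue $\iota$ into the parametrised family.)

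The genuine gap is at the $j$-end of the interpolation. You write that ``the strip degenerates (giving $j$, using exactness of $L$ to kill all non-constant contributions).'' The $\alpha$--$\beta$ piece is a $J$-holomorphic strip (bigon) with boundary on $L$ and $N$ and corners at their intersection points; it cannot be shrunk away by varying the domain, and exactness of $L$ tells you nothing about it. What actually happens at the relevant boundary point of the family (the configuration in Figure~\ref{fig:maps:4c}) is that the $\alpha$--$\beta$ piece bubbles off as a \emph{separate} stable disc component, leaving behind a disc with boundary entirely on $L$. Exactness forces that remaining disc to be constant, reproducing the configurations counted by $j$. But the bubble still contributes a factor, and the essential observation --- which your sketch omits --- is that this factor is $1$ precisely because $\alpha$ and $\beta$ are Floer-theoretic equivalences: the signed count of holomorphic bigons with corners $\alpha$, $\beta$ passing through a fixed generic point of $L$ is $1$ (cf.\ Figure~\ref{fig:pass}). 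Without invoking this property, the two ends of your interpolating family do not count the same thing, and the homotopy does not identify $\iota\circ\Psi$ with $j$. Exactness alone is not enough.
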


\begin{proof} 
Consider the composition
\begin{equation}\label{eq:9}
  CM_*(L; h,R) \xrightarrow{\rho} CF_*(L,N) \xrightarrow{\lambda} CM_*(N,N;R) \xrightarrow{m} CM_*(D^*N; H, R)
\end{equation}
By Lemma~\ref{lem:CWtoFloer:1} the last map has trivial Whitehead torsion. Indeed, the standard inclusion $N\subset D^*N$ has trivial Whitehead torsion. The proposition will thus follow from the claim that the composition of these is homotopic to the map from Lemma~\ref{lem:CWtoFloer:1}, which is represented in Figure~\ref{fig:maps:2}.

Let $\alpha\in CF^*(L,N)$ and $\beta \in CF^*(N,L)$ be Floer cochains inducing the Floer theoretic equivalences, and let $f:N \to \R$ be any Morse function on $N$ which has generic intersection of its stable manifolds with the unstable manifolds of $H$. The maps in the sequence are given by counting rigid objects in moduli spaces of maps as in Figure~\ref{fig:maps:1}.

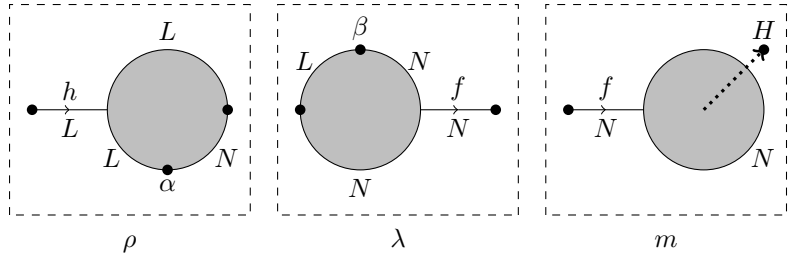
\begin{figure}[ht]
  \centering
  \begin{tabular}{ccc}
  \begin{tikzpicture}
    \draw[fill=lightgray] (1.8,0) circle (0.8);
    \draw[->] (0,0) -- (0.5,0) node [above] {$h$} node [below] {$L$};
    \draw (0.5,0) -- (1,0);
    \draw (1.8,0.8) node [above] {$L$};
    \draw (1.3,-0.4) node [below left] {$L$};
    \draw (2.3,-0.4) node [below right] {$N$};
    \fill (0,0) circle (2pt);
    \fill (2.6,0) circle (2pt);
    \fill (1.8,-0.8) circle (2pt) node [below] {$\alpha$};
    \draw[dashed] (-0.3,-1.4) -- (-0.3,1.4) -- (2.9,1.4) -- (2.9,-1.4) -- (-0.3,-1.4); 
  \end{tikzpicture} & 
  \begin{tikzpicture}
    \draw[fill=lightgray] (-1.8,0) circle (0.8);
    \draw (0,0) -- (-0.5,0);
    \draw[<-] (-0.5,0) node [above] {$f$} node [below] {$N$} -- (-1,0);
    \fill (-0,0) circle (2pt);
    \draw (-1.8,-0.8) node [below] {$N$};
    \draw (-1.3,0.4) node [above right] {$N$};
    \draw (-2.3,0.4) node [above left] {$L$};
    \fill (-2.6,0) circle (2pt);
    \fill (-1.8,0.8) circle (2pt) node [above] {$\beta$};
    \draw[dashed] (0.3,-1.4) -- (0.3,1.4) -- (-2.9,1.4) -- (-2.9,-1.4) -- (0.3,-1.4); 
  \end{tikzpicture} &
  \begin{tikzpicture}
    \draw[fill=lightgray] (1.8,0) circle (0.8);
    \draw[->] (0,0) -- (0.5,0) node [above] {$f$} node [below] {$N$};
    \draw (0.5,0) -- (1,0);
    \draw (2.3,-0.4) node [below right] {$N$};
    \fill (0,0) circle (2pt);
    \draw[dotted,->,very thick] (1.8,0) --  (2.6,0.8) node [above] {$H$};
    \draw[dashed] (-0.3,-1.4) -- (-0.3,1.4) -- (2.9,1.4) -- (2.9,-1.4) -- (-0.3,-1.4); 
    \fill (2.6,0.8) circle (2pt);
  \end{tikzpicture} \\
    $\rho$ & $\lambda$ & $m$
  \end{tabular}  
  \caption{The three maps in Equation~\eqref{eq:9}} \label{fig:maps:1}
\end{figure}
Here marked points on the boundary of a disc should be mapped to an intersection point.

To create a homotopy between the two maps we construct a $1$-parameter family of moduli spaces, parametrised by $s \in [-1,1]$, interpolating between Figures \ref{fig:maps:1} and \ref{fig:maps:2}.

\begin{itemize}
\item Near $ s= -1$, we glue the flow lines in the right and center of  Figure \ref{fig:maps:1} to obtain a (finite of length $a(s)$) flow line of $f$ on $N$. The resulting configuration of discs and flow lines is illustrated  in Figure~\ref{fig:maps:3}.
  \begin{figure}[ht]
    \centering
    \begin{tabular}{ccc}
      \begin{tikzpicture}
        \draw[fill=lightgray] (1.8,0) ellipse (0.8 and 0.8);
        \draw[fill=lightgray] (3.4,0) ellipse (0.8 and 0.8);
        \draw[->] (0,0) -- (0.5,0) node [above] {$h$} node [below] {$L$};
        \draw (0.5,0) -- (1,0);
        \draw (1.8,0.8) node [above] {$L$};
        \draw (1.3,-0.4) node [below left] {$L$};
        \draw (2.1,-0.6) node [below right] {$N$};
        \draw (3.1,0.6) node [above left] {$L$};
        \fill (1.8,-0.8) circle (2pt) node [below] {$\alpha$};
        \fill (3.4,0.8) circle (2pt) node [above] {$\beta$};
        \draw (3.4,-0.8) node [below] {$N$};
        \fill (0,0) circle (2pt);
        \fill (2.6,0) circle (2pt);
        \draw[->] (4.2,0) -- (4.7,0) node [above] {$N,f$} node [below] {$a(s)$};
        \draw (4.7,0) -- (5.2,0);
        \draw[dashed] (-0.3,-1.4) -- (-0.3,1.4) -- (7.2,1.4) -- (7.2,-1.4) -- (-0.3,-1.4); 
        \draw[fill=lightgray] (6,0) circle (0.8);
        \draw (6.5,-0.4) node [below right] {$N$};
        \fill (5.2,0) circle (2pt);
        \draw[dotted,->,very thick] (6,0) --  (6.8,0.8) node [above] {$H$};
        \fill (6.8,0.8) circle (2pt);
      \end{tikzpicture} 
    \end{tabular}
    \caption{The moduli space for $-1<s<0$} \label{fig:maps:3}
  \end{figure}
\item As $s$ approaches $0$ from the left, we let the length of the finite gradient flow line along $N$ go to zero. In particular, at $s=0$, we obtain the moduli space shown in Figure~\ref{fig:maps:threediscs}
  \begin{figure}[ht]
    \centering
    \begin{tabular}{ccc}
      \begin{tikzpicture}
        \draw[fill=lightgray] (1.8,0) ellipse (0.8 and 0.8);
        \draw[fill=lightgray] (3.4,0) ellipse (0.8 and 0.8);
        \draw[->] (0,0) -- (0.5,0) node [above] {$h$} node [below] {$L$};
        \draw (0.5,0) -- (1,0);
        \draw (1.8,0.8) node [above] {$L$};
        \draw (1.3,-0.4) node [below left] {$L$};
        \draw (2.1,-0.6) node [below right] {$N$};
        \draw (3.1,0.6) node [above left] {$L$};
        \fill (1.8,-0.8) circle (2pt) node [below] {$\alpha$};
        \fill (3.4,0.8) circle (2pt) node [above] {$\beta$};
        \draw (3.4,-0.8) node [below] {$N$};
        \fill (0,0) circle (2pt);
        \fill (2.6,0) circle (2pt);
        \draw[dashed] (-0.3,-1.4) -- (-0.3,1.4) -- (6.2,1.4) -- (6.2,-1.4) -- (-0.3,-1.4); 
        \draw[fill=lightgray] (5,0) circle (0.8);
        \draw (5.5,-0.4) node [below right] {$N$};
        \fill (4.2,0) circle (2pt);
        \draw[dotted,->,very thick] (5,0) --  (5.8,0.8) node [above] {$H$};
        \fill (5.8,0.8) circle (2pt);
      \end{tikzpicture} 
    \end{tabular}
    \caption{The moduli space for $-1<s<0$} \label{fig:maps:threediscs}
  \end{figure}
which has three disc components meeting at two boundary nodes. These three disc components represent a point in (the Gromov boundary of) the moduli space of discs with $3$ boundary marked points and one interior marked point. This moduli space is connected and has dimension $2$.
\item  For $0 < s< 1$ we pick a path in the moduli space of such discs, as shown in Figure~\ref{fig:maps:4b},
  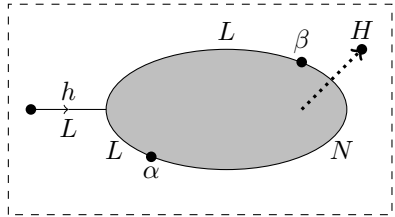
\begin{figure}[ht]
    \centering
    \begin{tabular}{ccc}
      \begin{tikzpicture}
        \draw[fill=lightgray] (2.6,0) ellipse (1.6 and 0.8);
        \draw[->] (0,0) -- (0.5,0) node [above] {$h$} node [below] {$L$};
        \draw (0.5,0) -- (1,0);
        \draw (2.6,0.8) node [above] {$L$};
        \draw (1.35,-0.3) node [below left] {$L$};
        \fill (1.6,-0.63) circle (2pt) node [below] {$\alpha$};
        \fill (3.6,0.63) circle (2pt) node [above] {$\beta$};
        \draw (3.85,-0.3) node [below right] {$N$};
        \fill (0,0) circle (2pt);
        \draw[dashed] (-0.3,-1.4) -- (-0.3,1.4) -- (4.8,1.4) -- (4.8,-1.4) -- (-0.3,-1.4); 
        \draw[dotted,->,very thick] (3.6,0) --  (4.4,0.8) node [above] {$H$};
        \fill (4.4,0.8) circle (2pt);
      \end{tikzpicture} 
    \end{tabular}
    \caption{The moduli space at $0<s<1$} \label{fig:maps:4b}
  \end{figure}
between the configuration appearing at $s=0$, and the configuration shown in Figure~\ref{fig:maps:4c}
  \begin{figure}[ht]
    \centering 
    \begin{tabular}{ccc}
      \begin{tikzpicture}
        \draw[fill=lightgray] (2.6,0) ellipse (1.6 and 0.8);
        \draw[->] (0,0) -- (0.5,0) node [above] {$h$} node [below] {$L$};
        \draw (0.5,0) -- (1,0);
        \draw (2.6,0.8) node [above] {$L$};
        \draw (2.6,-0.8) node [below] {$L$};
        \fill (0,0) circle (2pt);
        \draw[dashed] (-0.3,-1.4) -- (-0.3,1.5) -- (6.5,1.5) -- (6.5,-1.4) -- (-0.3,-1.4); 
        \draw[fill=lightgray] (5,0) circle (0.8);
        \draw (5.8,0) node [right] {$N$};
        \fill (4.2,0) circle (2pt);
        \draw[dotted,->,very thick] (2.6,0) --  (3.4,1.0) node [above] {$H$};
        \fill (3.4,1.0) circle (2pt);
        \fill (5.0,-0.8) circle (2pt) node [below] {$\alpha$};
        \fill (5.0,0.8) circle (2pt) node [above] {$\beta$};
        \draw (4.6,0.5) node [above left] {$L$};
        \draw (4.6,-0.5) node [below left] {$L$};
      \end{tikzpicture} 
    \end{tabular}
    \caption{The moduli space for $s=1$}\label{fig:maps:4c}
  \end{figure}
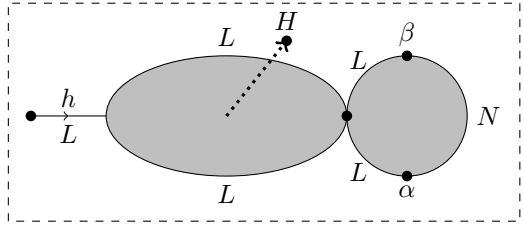
consisting of a stable disc with two components meeting at a node, one carrying an interior marked point and one boundary marked, and the other carrying two boundary marked point labelled by the equivalences $\alpha$ and $\beta$.
\end{itemize}
In this way, we have produced a homotopy between the composition appearing in Equation \eqref{eq:9}, and the map defined by the moduli spaces for $s=1$. Since $\alpha$ and $\beta$ are Floer theoretic equivalences, the (signed) count of holomorphic bigons with corners labelled $\alpha$ and $\beta$ passing thought a generic point of $L$ is $1$. This implies that the (signed) count of configurations shown in  Figure  \ref{fig:maps:4c} is the same as the count of the moduli space appearing in  Figure~\ref{fig:maps:2}, which, by Lemma \ref{lem:CWtoFloer:1}, agrees with the classical torsion.  Note in particular that each contribution to the local coefficient system is the same since this is given by the path (unique up to homotopy) from the far left of the diagram to the marked point at the end of the arrow decorated with $H$.
\end{proof}

\appendix
\section{The monotonicity Lemma} 
Let $M$ be any symplectic manifold (possibly non-compact) with a given almost complex structure $J$, and let $K$ and $Q$ be two proper Lagrangians in $M$ with an isolated intersection point $x\in K\cap Q$. 

Let $U_x$ be a neighbourhood of $x$, and consider $J$-holomorphic discs $u \co D^2 \to M$ which satisfy the following conditions with respect to a constant $0 <\delta$:
\begin{align} \label{eq:image_marked_points}
& \parbox{30em}{$u(1) \in U_x$ and $u(-1) \notin U_x$} \\ \label{eq:image_segments}
& \parbox{30em}{$u$ maps the upper semi-circle in $\partial D^2$ to a $\delta$ neighbourhood of $K$, and the lower semi-circle to a $\delta$ neighbourhood of $Q$.}
\end{align}
The following technical result about minimal areas of holomorphic curves is a variant of the monotonicity Lemma,  and is crucially used in Sections \ref{sec:whit-tors-fukaya} and \ref{sec:spectr-sequ}:
\begin{lemma} \label{lem:FloerWhitehead:3}
For each sufficiently small neighborhood $U_x$, there is a constant $\delta$ such that all holomorphic discs satisfying the Conditions \eqref{eq:image_marked_points} and \eqref{eq:image_segments} have area greater than $\delta$.
\end{lemma}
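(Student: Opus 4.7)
The plan is to reduce to a near-Euclidean setting via Darboux coordinates and invoke the classical monotonicity lemma for $J$-holomorphic curves with totally real boundary, upgraded to allow the boundary to lie in a small $\delta$-neighbourhood of a totally real subset.

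First I would shrink $U_x$ if necessary so that it sits inside a Darboux chart centred at $x$, in which $K \cup Q$ meets the chart only along $K$ and $Q$ with $K \cap Q \cap U_x = \{x\}$; identify a neighbourhood of $x$ with an open subset of $(\bR^{2n},\omega_{\mathrm{std}})$ sending $x$ to $0$, and fix a round ball $B_{2r}(0) \subset U_x$. Condition \eqref{eq:image_segments} forces $u(1) \in \overline{N_\delta(K)} \cap \overline{N_\delta(Q)} \cap U_x$; since $K \cap Q \cap U_x = \{0\}$, this intersection collapses to $\{0\}$ as $\delta \to 0$, so for $\delta$ small enough (depending on $U_x$) we have $u(1) \in B_{r/2}(0)$, while $u(-1) \notin U_x$ puts $u(-1)$ outside $B_{2r}(0)$.

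Next I would apply monotonicity on the subsurface $\Omega \subset D^2$ defined as the connected component of $u^{-1}(B_r(0))$ whose closure contains the point $1 \in \partial D^2$. Both boundary arcs of $\partial D^2$ from $1$ to $-1$ carry $u$ continuously from $u(1) \in B_{r/2}$ to $u(-1) \notin B_{2r}$ and hence must cross $\partial B_r(0)$; consequently the boundary of $\Omega$ consists of one arc of $\partial D^2$ containing $1$ (on which $u$ maps into $N_\delta(K) \cup N_\delta(Q)$) together with interior arcs of $D^2$ mapping to $\partial B_r(0)$. The restriction $u|_\Omega \co \Omega \to B_r(0)$ is then a proper $J$-holomorphic map whose image contains $u(1)$, a point within $\delta$ of the origin. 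The monotonicity lemma for $J$-holomorphic curves with (nearly) totally real boundary yields
\[
  \area(u) \ \geq\ \area(u|_\Omega) \ \geq\ c\,r^2,
\]
for a constant $c > 0$ depending only on the Darboux chart, and setting $\delta < c\,r^2$ delivers the conclusion.

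The main technical hurdle is establishing the monotonicity estimate in the presence of two nonstandard features: the boundary of $u$ is only $\delta$-close to the totally real set $K \cup Q$ rather than lying on it, and the origin is a singular point of $K \cup Q$ where two smooth Lagrangians meet. The first is handled by a $C^1$-small perturbation of $u$ near its boundary to push it exactly onto $K \cup Q$, at the cost of a change in area of $O(\delta)$, after which the standard monotonicity argument applies verbatim. For the second, one excises a ball of radius $O(\delta)$ around the origin; away from this ball the totally real set is smooth, and the cutoff contribution to the area is $O(\delta^2)$, negligible compared with $c\,r^2$ once $\delta$ is chosen small enough.
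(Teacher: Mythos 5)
Your proposal takes a genuinely different route from the paper's, and it contains gaps that are not easily repaired. You try to apply a \emph{boundary} version of monotonicity to the restriction $u|_\Omega$, handling the two non-standard features (boundary only $\delta$-close to $K\cup Q$, and the singular point where $K$ and $Q$ meet) by a $C^1$-small perturbation and an $O(\delta)$-excision respectively. Both of these handwaves hide real difficulties. The perturbation step is the more serious one: a $C^1$-small perturbation of $u$ that pushes its boundary onto $K\cup Q$ destroys holomorphicity, and the monotonicity lemma you then want to ``apply verbatim'' is a statement about $J$-holomorphic maps, not about smooth maps that are $C^1$-close to a holomorphic one. There is no soft argument that a holomorphic disc with boundary $\delta$-near a totally real submanifold has area comparable to one with boundary exactly on it, and in fact establishing a robust version of boundary monotonicity under such a relaxed boundary condition is a substantial technical task.

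The excision step is also problematic and in tension with your setup. The point you want to base monotonicity at is $u(1)$, which lies within $\delta$ of the origin; excising a ball of radius $O(\delta)$ around the origin is exactly the region where $u(1)$ sits, so after excision you have no natural base point, and the boundary arcs near $K$ and near $Q$ are now two separate smooth pieces with no point of $u$ pinned near either. The singularity of $K\cup Q$ at $x$ is precisely where the estimate has to do work, and cutting it out loses the constraint you were counting on.

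By contrast, the paper sidesteps both issues entirely. It never applies boundary monotonicity and never touches the singular boundary condition $K\cup Q$. Instead, after choosing a Darboux chart in which $Q$ is $\R^n$ and $K$ is the graph of $df$, it constructs an auxiliary codimension-two submanifold
\[
W=\bigl\{(q,p): \norm{q}=\epsilon/2,\ \norm{p}=\norm{d_q f}/2\bigr\}
\]
disjoint from $K$, from $Q$, and from $U$. Conditions \eqref{eq:image_marked_points} and \eqref{eq:image_segments} force $\partial u$ to define, after collapsing the boundary of the chart, a loop with nontrivial mod-$2$ linking with $W$; hence $u$ must intersect $W$ in its interior, with $\partial u$ staying outside a $C\delta$-neighbourhood of $W$. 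Now the \emph{interior} monotonicity lemma (which needs no boundary hypotheses at all) gives the area lower bound. The auxiliary $W$ is the idea you are missing: it converts the delicate boundary estimate into a robust interior one via a topological linking argument, and it is what makes the lemma provable by elementary means.
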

\begin{proof}
  Pick a Darboux chart $D^{2n}_\epsilon\to M$ mapping the origin to $x$ such that $Q$ is identified with $D^n_\epsilon$, and such that the image of the tangent space of $K$ at $x$ has trivial intersection with $i\R^n$ (i.e. is non-vertical). Locally $K$ is given by the differential of a function, and by making $\epsilon$ smaller we may assume that $f:D^n_\epsilon \to \R$ is such a function, and that the differential defines $K$ in a neighborhood of the type $D_\epsilon^n \times iD_{\epsilon'}^n$.

Since $f$  has an isolated critical point at $0$, we have a co-dimension 2 manifold
  \begin{align*}
    W = \{(q,p)\in D^n_\epsilon \times iD^n_{\epsilon'} \mid \norm{q}=\epsilon/2, \norm{p} = \norm{d_qf}/2\}
  \end{align*}
which is an $S^{n-1}$ bundle over the sphere of radius $\epsilon/2$ in $\bR^n$.

We shall presently appeal to the monotonicity Lemma \cite[Proposition 4.4.1]{MR1274929}: there is a constant $C$ so that any holomorphic curve which intersects $W$ and has boundary in the complement of the $C \cdot A$ neighbourhood of $W$ has area greater than $A$, whenever $A$ is sufficiently small.

Let us therefore choose some neighbourhood $U$ of $x$ which is disjoint from $W$, and pick $\delta$ so that the following properties hold in the Darboux chart:
\begin{enumerate}
\item The $\delta$ neighbourhoods of $Q$ and $K$ are disjoint away from $U$.
\item The $C \delta$ neighbourhood of $W$ is disjoint from $U$ and from the $\delta$ neighbourhoods of $Q$ and $K$.
\end{enumerate}
The first assumption implies that $u(-1)$ lies outside of the chart. The image of the boundary of $u$ under the collapse map $ M \to D^{2n}_\epsilon/ \partial D^{2n}_\epsilon$ is therefore a loop which winds once around $W$. The algebraic intersection number of $u$ with $W$ is hence non-trivial modulo 2. The image of $u$ therefore intersects $W$, while the image of its boundary lies outside the $C \delta$-neighbourhood of $W$. The monotonicity Lemma implies that such a curve has area greater than $\delta$, completing the proof.
\end{proof}

We shall also consider the following variant:

\begin{corollary} \label{cor:Monotonicity:1}
For each sufficiently small neighbourhood of $x$, there is a constant $\delta$ such that, if Condition \eqref{eq:image_segments} holds, the image of $\pm 1$ under $u$ lie in $ U_x$, and the area of $u$ is less than $\delta$, then the path $\{u(t)\}_{t \in [-1,1]}$ is homotopic relative its end points to a map with image in $U_x$.
\end{corollary}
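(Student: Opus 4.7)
The plan is to argue by contradiction using compactness for $J$-holomorphic discs with small area.  Fix $U_x$ small enough that $K\cap Q\cap\overline{U_x}=\{x\}$, and suppose the statement fails.  Then I would extract a sequence of holomorphic discs $u_i$ satisfying condition~\eqref{eq:image_segments} for constants $\delta_i\to 0$, with $u_i(\pm 1)\in U_x$ and area less than $\delta_i$, such that no $u_i|_{[-1,1]}$ is homotopic rel endpoints to a path in $U_x$.

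The key claim is that the diameter of the image of $u_i$ tends to zero.  This is a standard consequence of compactness for $J$-holomorphic discs with nearly-Lagrangian boundary: the monotonicity lemma rules out any bubbling in the limit, since each bubble would carry a definite positive amount of energy, so the limit of a convergent subsequence is a constant disc.  The constant value is an accumulation point of $\{u_i(\pm 1)\}\subset\overline{U_x}$ and, as $\delta_i\to 0$, is forced by the boundary condition to lie in $K\cap Q\cap\overline{U_x}=\{x\}$; thus the limit is the constant map at $x$ with uniform convergence.

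Consequently, for $i$ sufficiently large the entire image of $u_i$ is contained in $U_x$, so $u_i|_{[-1,1]}$ already lies in $U_x$ and is trivially homotopic rel endpoints to a path in $U_x$, contradicting the assumed failure.  The main technical subtlety I foresee is formulating Gromov compactness in the presence of moving boundary conditions (boundary arcs in shrinking $\delta_i$-neighbourhoods rather than on the Lagrangians themselves).  The cleanest way around this is likely to bypass full Gromov compactness and instead argue directly from Lemma~\ref{lem:FloerWhitehead:3}: applying the monotonicity lemma at interior points far from $K\cup Q$, and to any pair of distant points in the image, gives a quantitative bound on the diameter of the image by a power of the area, which suffices for the contradiction in exactly the same spirit as the proof of Lemma~\ref{lem:FloerWhitehead:3}.
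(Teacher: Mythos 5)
Your proposed argument has a genuine gap: the key intermediate claim, that the diameter of the images of $u_i$ must tend to zero, is false, and the subtlety you flag about moving boundary conditions is not a technicality one can route around using monotonicity at pairs of points. The monotonicity lemma does show, for a small-area disc whose boundary lies in the $\delta$-neighbourhood of $K \cup Q$, that the entire image stays close to $K \cup Q$; it does not prevent the image from traveling a long distance \emph{along} $K$ or $Q$. Concretely, take $M = \C$, $K = \R$, $Q = i\R$, $x = 0$ and any fixed $U_x$. For every $\delta > 0$ one can find a domain $\Omega \subset \C$ of unit diameter consisting of two thin fingers of width roughly $\delta/10$, one running along $\R$ to $\mathrm{Re} = 1$ and one along $i\R$ to $\mathrm{Im} = 1$, joined near the origin; its area is of order $\delta/5 < \delta$, and its boundary decomposes into an arc within $\delta$ of $K$ and an arc within $\delta$ of $Q$ that meet at two points inside $U_x$. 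The Riemann map $u\co D^2 \to \Omega$, normalised so these two boundary points are $u(\pm 1)$ with the near-$K$ arc as image of the upper semicircle, then satisfies Condition~\eqref{eq:image_segments}, $u(\pm 1)\in U_x$ and $\mathrm{area}(u) < \delta$, yet $\mathrm{diam}\,\mathrm{im}(u)$ stays bounded below as $\delta \to 0$. So neither Gromov compactness nor a quantitative monotonicity estimate can force the image into $U_x$, and the contradiction you seek never materialises. (The statement of the corollary is of course still vacuously true in this example because $\C$ is simply connected, but that does not rescue the intermediate claim.)

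The paper sidesteps the issue entirely by a lifting argument rather than by a diameter bound: if $u|_{[-1,1]}$ is not homotopic rel endpoints into $U_x$, then the lift $\tilde u$ to the universal cover $\widetilde M$ has $\tilde u(1)$ and $\tilde u(-1)$ in two distinct components of the preimage of $U_x$. Replacing $K$ and $Q$ by their preimages, the lifted disc now satisfies both Conditions~\eqref{eq:image_marked_points} and~\eqref{eq:image_segments} with respect to one of these components, so Lemma~\ref{lem:FloerWhitehead:3} (applied in the cover, where it holds verbatim since the covering map is a local isometry and the Darboux chart around $x$ lifts) forces $\mathrm{area}(\tilde u) = \mathrm{area}(u) > \delta$, a contradiction. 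This applies Lemma~\ref{lem:FloerWhitehead:3} directly rather than re-deriving a compactness statement about the images, and in particular requires no control on the diameter of $\mathrm{im}(u)$.
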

\begin{proof}
  Assume by contradiction that there is such a disc for which the path from $u(-1)$ to $u(1)$ is not homotopic to a path with image in $U_x$, we then see that the lift of $u$ to the universal cover of $M$ has endpoints mapping to distinct inverse images of $U_x$. Replacing $K$ and $Q$ by their inverse images, we find a contradiction to the previous Lemma.
\end{proof}

\bibliographystyle{plain}
\bibliography{../Mybib}

\end{document}